\newtheorem{theorem}{Theorem}[section] 
\newtheorem{claim}[theorem]{Claim}
\newtheorem{lemma}[theorem]{Lemma} 
\newtheorem{observation}[theorem]{Observation} 
\newtheorem{conclusion}[theorem]{Conclusion}
\theoremstyle{definition}
\newtheorem{definition}[theorem]{Definition}
\newtheorem{example}[theorem]{Example}
\newtheorem{thesis}[theorem]{Thesis}
\newtheorem{conjecture}[theorem]{Conjecture}
\newtheorem{discussion}[theorem]{Discussion}
\theoremstyle{remark}
\newtheorem{remark}[theorem]{Remark}
\newtheorem{question}[theorem]{Question}
\newtheorem{context}[theorem]{Context}
\newcommand{\rest}{{\restriction}}
\newcommand{\Range}{{\rm Range}}
\newcommand{\Min}{{\rm Min}}
\newcommand{\Dom}{{\rm Dom}}
\newcommand{\Ord}{{\rm Ord}}
\newcommand{\proj}{{\rm proj}}
\newcommand{\rk}{{\rm rk}}
\newcommand{\tp}{{\rm tp}}
\newcommand{\wilog}{{\rm without loss of generality}}
\newcommand{\Wilog}{{\rm Without loss of generality}}
\newcommand{\then}{{\underline{then}}}
\newcommand{\when}{{\underline{when}}}
\newcommand{\where}{{\underline{where}}}
\newcommand{\Then}{{\underline{Then}}}
\newcommand{\Iff}{{\underline{iff}}}
\newcommand{\mn}{{\medskip\noindent}}
\newcommand{\sn}{{\smallskip\noindent}}
\newcommand{\cB}{{\mathscr B}}
\newcommand{\bbN}{{\mathbb N}}
\newcommand{\gK}{{\mathfrak K}}
\newcommand{\gA}{{\mathfrak A}}
\newcommand{\cJ}{{\mathscr J}}
\newcommand{\cL}{{\mathscr L}}
\newcommand{\bbC}{{\mathbb C}}
\newcommand{\cH}{{\mathscr H}}
\newcommand{\cI}{{\mathscr I}}
\newcommand{\bbL}{{\mathbb L}}
\newcommand{\bbP}{{\mathbb P}}
\newcommand{\cP}{{\mathscr P}}
\newcommand{\gy}{{\mathfrak y}}
\newcommand{\gx}{{\mathfrak x}}
\newcommand{\bbQ}{{\mathbb Q}}
\newcommand{\bbR}{{\mathbb R}}
\newcommand{\cS}{{\mathscr S}}
\newcommand{\cT}{{\mathscr T}}
\newcommand{\cU}{{\mathscr U}}
\newcommand{\cZ}{{\mathscr Z}}
\newcommand{\cf}{{\rm cf}}
\newcommand{\pr}{{\rm pr}}
\def\mathunderaccent#1#2 {\let\theaccent#1\skewfactor#2
\mathpalette\putaccentunder}
\def\putaccentunder#1#2{\oalign{$#1#2$\crcr\hidewidth
\vbox to.2ex{\hbox{$#1\skew\skewfactor\theaccent{}$}\vss}\hidewidth}}
\def\name{\mathunderaccent\tilde-3 }
\newenvironment{PROOF}[2][\proofname.]
   {\begin{proof}[#1]}
   {\end{proof}}
\begin{document}
\makeatletter\def\shfiuwefootnote{\gdef\@thefnmark{}\@footnotetext}\makeatother\shfiuwefootnote{Version 2017-04-07\_12. See \url{https://shelah.logic.at/papers/849/} for possible updates.}

\title {Beginning of stability theory for polish spaces \\
Sh849}
\author {Saharon Shelah}
\address{Einstein Institute of Mathematics\\
Edmond J. Safra Campus, Givat Ram\\
The Hebrew University of Jerusalem\\
Jerusalem, 91904, Israel\\
 and \\
 Department of Mathematics\\
 Hill Center - Busch Campus \\ 
 Rutgers, The State University of New Jersey \\
 110 Frelinghuysen Road \\
 Piscataway, NJ 08854-8019 USA}
\email{shelah@math.huji.ac.il}
\urladdr{http://shelah.logic.at}
\thanks{The author would like to thank the Israel Science Foundation for
partial support of this research (Grant No. 242/03). 
This was part of \cite{Sh:771} (first version 29/March/2001) which was
submitted 17/7/2002; but separated in Feb. 2009.  First version - 09/Feb/13}



\subjclass[2010]{Primary: 03C45, 03E15; Secondary: 03C57, 03C75}

\keywords {model theory, descriptive set theory, classification
  theory, number of non-isomorphic models, indiscernible sets,
  categoricity, Polish structures}

\date{April 7, 2017}

\begin{abstract}  We consider stability theory for Polish spaces and
more generally for definable structures (say with elements of a set of
reals).  We clarify by proving some equivalent conditions for
$\aleph_0$-stability.  We succeed to prove
existence of indiscernibles under reasonable conditions; this gives
strong evidence that such a theory exists.
\end{abstract}

\maketitle
\numberwithin{equation}{section}
\setcounter{section}{-1}
\newpage

\section{Introduction} 

\subsection {General Aims} \
\bigskip

\begin{question}
\label{0.4}
Is there a stability theory/classification theory of Polish
spaces/algebras (more generally definable structures say on the continuum)?

Naturally we would like to develop a parallel to classification theory
and in particular stability theory (see \cite{Sh:c}).  
A natural test problem is to 
generalize ``Morley theorem = \L os conjecture". 
\underline{But} we only have one model so does it mean anything? 

On way is to restrict ourselves to a class of e.g. abelian groups and
replace ``categorical" by ``free"; this is reasonable as a free
algebra is unique in a strong sense (determined up to isomorphism by
its dimension which is equal to its cardinality when uncountable, well
when the vocabulary is countable).

We may consider any variety or just a theory consisting of 
universal Horn sentences (as then
being a free algebra is well defined);
this is an interesting question per se.  But still this is a special
case; and in general, we may change the universe.
\end{question}

\begin{example}
\label{0.5}
If $\bbP$ is adding $(2^{\aleph_0})^+$-Cohen subsets of $\omega$ then

\[
(\bbC)^{\mathbf V} \text{ and } (\bbC)^{\mathbf V[\bbP]}
\]

\mn
are both algebraically closed fields of characteristic $0$ which are
not isomorphic (as they have different cardinalities).

So we restrict ourselves to forcing notions $\bbP_1 \lessdot \bbP_2$ such that

\[
(2^{\aleph_0})^{\mathbf V[\bbP_1]} = (2^{\aleph_0})^{\mathbf V[\bbP_2]}
\]

\mn
and compare the Polish models in $\mathbf V^{{\bbP}_1},
\mathbf V^{{\bbP}_2}$.    We may restrict our forcing notions to
c.c.c. or whatever.
\end{example}

\begin{example}
\label{0.6}
Under any such interpretation
\mn
\begin{enumerate}
\item[$(a)$]   $\bbC =$ the field of complex numbers is categorical
\sn
\item[$(b)$]   $\bbR =$ the field of the reals is not 
(by adding $2^{\aleph_0}$ many Cohen reals).
\end{enumerate}
\mn
(Why?  Say we assume $\bbP_1 \lessdot \bbP_2,(2^{\aleph_0})^{\mathbf
V[\bbP_1]} = (2^{\aleph_0})^{\mathbf V[\bbP_2]}$ but $\bbR^{\mathbf
V[\bbP_1]} \ne \bbR^{\mathbf V[\bbP_2]}$. 
Trivially: $\bbR^{\mathbf V[{\bbP}_2]}$ is complete
in $\mathbf V[\bbP_2]$ while $\bbR^{\mathbf V[\bbP_1]}$ in 
$\mathbf V^{[{\bbP}_2]}$ is not complete, but there are less trivial reasons).
\end{example}

\begin{conjecture}
\label{0.7}
We have a dichotomy for ``reasonable definable model" (e.g. Polish or
Suslin ones), i.e. either the model
is similar to categorical theories, or there are ``many complicated
models" under the present interpretation.
\end{conjecture}

\noindent
So in particular we expect the natural variants of central notions
defined below (like categoricity) will be equivalent; in particular we
expect that it will be enough to consider the forcing notions of
adding Cohen reals.  Naturally those questions call for the use of
descriptive set theory on the one hand and model theory on the other
hand; in particular to using definability in both senses and using
$\bbL_{\aleph_1,\aleph_0}(\mathbf Q)$.

Presently, i.e. here there is no serious use of either; the questions
are naturally inspired by model theory.  It would be natural to
consider questions inspired by the investigation of such specific
structures; to some extent considering the freeness of a definable
Abelian group falls under this.

A priori, trying to connect different directions in mathematics is
tempting, but it may well lack non-trivial results.  We suggest that
the result on the existence of indiscernibility, \ref{7.18}, give
serious evidence that this is not the case here; note that even the weaker
\ref{7.4} gives more than set theory, i.e. Erd\"os-Rado theorem.  That
is, the question is: is there a non-trivial theory in this direction?
While the present work does not achieve a real theory of this kind, we
believe that it gives an existence proof.

Let us elaborate suggestions for the definition of ``categorical" and
for ``definable".  Recall that it is well known that: a Polish
structure is an $F_\sigma$-one which is a Borel one; a Borel (set or
structure) is a $\Sigma^1_1$-one, a $\Sigma^1_1$-set/structure is a
$\Sigma^1_2$-one and a
$\Sigma^1_2$-set/structure is a $\aleph_1$-Suslin one, see e.g. \cite{J}.
  
\begin{definition}
\label{z7f}
1) Let $\gA$ denote a definition of a $\tau$-structure, $\tau$ a
countable vocabulary, the set of elements of the structure is the reals or a
definable set of reals such that it is absolute enough, i.e. for any
forcing notion $\bbP \lessdot \bbQ$ we have $\gA[\mathbf V^{\bbP}]
\subseteq \gA[\mathbf V^{\bbQ}]$ but we may say ``the structure/model $\gA$".

\noindent
2) We say $\gA$ is $F_\sigma/\text{Borel}/\Sigma^1_1/\kappa$-Suslin,
etc., if the definition mentioned above is
$F_\sigma/\text{Borel}/\Sigma^1_1/\kappa$-Suslin, etc.

\noindent
3) We say $\gA$ is categorical$_1$ \when \, for any 
forcing notions $\bbP \lessdot \bbQ$ such that $\Vdash
   ``(2^{\aleph_0})^{\mathbf V[\bbP]} = (2^{\aleph_0})^{\mathbf V[\bbQ]}"$
the structures $\gA[\mathbf V^{\bbP}],\gA[\mathbf V^{\bbQ}]$ are isomorphic in
 $\mathbf V^{\bbQ}$.

\noindent
3A) We say $\gA$ is categorical$_1$ under $\varphi$ (e.g. $\varphi =
(2^{\aleph_0} = \aleph_1$) means that for any forcing notions
$\bbP \lessdot \bbQ$ satisfying 
$\Vdash_{\bbP} ``\varphi",\Vdash_{\bbQ} ``\varphi"$ and 
$\Vdash_{\bbQ} ``(2^{\aleph_0})^{\mathbf V[\bbP]} = 
(2^{\aleph_0})^{\mathbf V[\bbQ]}"$ the structures 
$\gA[\mathbf V^{\bbP}],\gA[\mathbf V^{\bbQ}]$ are isomorphic in
   $\mathbf V^{\bbQ}$.

\noindent
4) We say $\gA$ is $\gK$-categorical$_1$ \when \, above $\bbP,\bbQ \in
   \gK$ (or the pair $(\bbP,\bbQ) \in \gK$).

\noindent
5) If $\lambda$ is a cardinal or a definition of a cardinal \then \, $\gA$ is
categorical$_1$ in $\lambda$, is defined as in (3A) but $\Vdash_{\bbP}
   ``\varphi",\Vdash_{\bbQ} ``\varphi"$ is replaced by
$\lambda[\mathbf V^{\bbP}] = \lambda[\mathbf V^{\bbQ}] =
   (2^{\aleph_0})^{\mathbf V[\bbP]} = (2^{\aleph_0})^{\mathbf V[\bbQ]}$
   and this is a non-empty condition; similarly in (3A),(4).

\noindent
6) Let $T$ be a set of (first order) equations in the countable
   vocabulary $\tau$.  Let $\gA$ be a model of $T$.  We
   say $\gA$ is free$_1$ for $\gK$ \when \, for every $\bbP \in
\gK$ the model $\gA[\mathbf V^{\bbP}]$ is a free algebra.  Similarly, parallely
   to (3A),(4),(5).
\end{definition}

\begin{conjecture}
\label{z7k}
1) If, e.g. a $\Sigma^1_1$-structure $\gA$ 
is categorical$_1$ in some $\aleph_\alpha
\ge \aleph_\omega$ for $\gK$ \then \, it is categorical in every
 $\aleph_\alpha \ge \aleph_\omega$ of cofinality $> \aleph_0$ 
for $\gK$ where $\gK =
\{(\bbP,\bbQ):\bbP \lessdot \bbQ$ and $(2^{\aleph_0})[\mathbf V^{\bbP}] =
(2^{\aleph_0})[\mathbf V^{\bbQ}] \ge \aleph_{\omega}[\mathbf
V^{\bbP}]$, Card$[\mathbf V^{\bbP}] = \text{ Card}[\mathbf V^{\bbQ}]\}$.

\noindent
2) Or at least ``for every $\aleph_\alpha \ge \aleph_{\omega_1}"$
   and/or restricting the forcing notion to be c.c.c.

\noindent
3) Similarly for freeness.
\end{conjecture}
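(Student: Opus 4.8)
We sketch a possible route to Conjecture \ref{z7k}; it should be an analogue of Morley's categoricity theorem in which passage to a generic extension plays the role that passage to a model of a prescribed cardinality plays classically. The plan is first to set up the local apparatus: for a $\Sigma^1_1$-structure $\gA$ and a forcing notion $\bbP$, define types over subsets of $\gA[\bold V^{\bbP}]$ using the defining formulas of $\gA$ together with $\bbL_{\aleph_1,\aleph_0}(\bold Q)$-formulas, and isolate the appropriate stability notion: a bound on the number of such types that is preserved along any pair $(\bbP,\bbQ)\in\gK$. The crucial feature is absoluteness --- because $\gA$ is $\Sigma^1_1$ (and, for part (4), because it has ``enough absoluteness''), the relevant type spaces and their sizes are computed the same way in $\bold V^{\bbP}$ and $\bold V^{\bbQ}$ whenever the continuum is the same in both. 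Hence categoricity in a single $\aleph_\alpha\ge\aleph_\omega$ should force these type spaces to be small, i.e.\ force a stability-like property that then holds in every generic extension in which $2^{\aleph_0}\ge\aleph_\omega$.

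Second, I would feed this into the existence-of-indiscernibles theorem \ref{7.18}: in any model $\gA[\bold V^{\bbP}]$ with $2^{\aleph_0}$ large one extracts, by an Ehrenfeucht--Mostowski / omitting-types argument carried out inside $\bold V^{\bbP}$, a long indiscernible sequence and hence a ``minimal-like'' skeleton controlling the model. The threshold $\aleph_\omega$ (or $\aleph_{\omega_1}$ in part (2)) enters exactly here: one needs enough cardinal-arithmetic room to run the Erd\H{o}s--Rado and stationary-set combinatorics underlying \ref{7.18}, just as $\aleph_\omega$ is the classical threshold in two-cardinal and categoricity-transfer theorems.

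Third is the transfer. From categoricity in one $\aleph_\beta$ I would derive categoricity in every $\aleph_\alpha\ge\aleph_\omega$ by two arguments. Downward: were there non-isomorphic models at some $\aleph_\alpha<\aleph_\beta$, a L\"owenheim--Skolem-type reflection --- building $\aleph_\beta$-sized models as increasing unions of small submodels and using the indiscernibles to control the unions --- should lift the non-isomorphism up to $\aleph_\beta$, all inside a suitable $\bbP\lessdot\bbQ$ with $2^{\aleph_0}$ pinned to $\aleph_\beta$ and cardinals preserved. Upward: categoricity at $\aleph_\alpha$ together with a failure of structure at a larger cardinal would, via the many-models half of the dichotomy Conjecture \ref{0.7}, contradict categoricity there. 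The freeness variant, part (3), should be handled identically, replacing ``isomorphic to the unique model'' by ``free algebra'' and using that freeness of a $\Sigma^1_1$-algebra is likewise governed by the indiscernible skeleton.

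The main obstacle I expect is precisely this transfer machinery. Unlike the first-order setting there is no compactness and no honest ``number of models of cardinality $\lambda$'': the only models available sit in generic extensions, so each classical tool --- L\"owenheim--Skolem, prime models over indiscernibles, uniqueness of the minimal model --- must be re-proved in the forcing-iteration framework while keeping the $\bbL_{\aleph_1,\aleph_0}(\bold Q)$-definability of $\gA$ absolute along $\bbP\lessdot\bbQ$. In particular the downward step --- fabricating non-isomorphic models at $\aleph_\beta$ out of non-isomorphic models at a smaller cardinal --- is the genuinely hard point, and is presumably why the conjecture is stated with the hypothesis $\aleph_\alpha\ge\aleph_\omega$ (and only conjecturally already at $\aleph_{\omega_1}$).
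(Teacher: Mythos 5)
There is a genuine gap, and it is structural: the statement you are trying to prove is Conjecture \ref{z7k}, which the paper does not prove at all --- it is stated as open (the text says explicitly that ``the parallels here are still open''), and the only theorems actually established in the paper are much weaker: the end-extension indiscernibility lemma \ref{7.4}, the order/unstability lemma \ref{7.7}, the rank equivalences \ref{7.14}, \ref{7.17}, the indiscernible-extraction theorem \ref{7.18}, and Claim \ref{d2} (instability implies non-categoricity). Your text is a research programme, not a proof: every step that would carry the load is left unexecuted. Concretely, (i) the claim that type spaces are computed absolutely between $\bold V^{\bbP}$ and $\bold V^{\bbQ}$ once the continuum matches is asserted, not proved, and nothing in the paper delivers it; (ii) to invoke Theorem \ref{7.18} you need $\aleph_0$-stability (or $\mu$-stability) of $(\gA,\Delta)$, but you never derive stability from categoricity in a single $\aleph_\alpha$ --- the paper only has the converse-flavoured Claim \ref{d2} (unstable $\Rightarrow$ not categorical), and \ref{7.4y}(1) warns that the indiscernibility results by themselves are not yet evidence for such a dichotomy; (iii) your ``upward'' transfer explicitly appeals to Conjecture \ref{0.7}, which is itself unproven, so the argument is circular at that point; (iv) the ``downward'' step, which you yourself identify as the hard point, is exactly the content of the conjecture and is not addressed.

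Moreover, the paper's own Theorem \ref{0.9} and Conclusion \ref{0.10} show why any naive transfer argument must fail: there is an $F_\sigma$ abelian group that is free iff $2^{\aleph_0}<\aleph_{736}$, and for each $n$ an example categorical in $\aleph_\alpha$ exactly for $\alpha\le n$. So categoricity/freeness genuinely changes as the continuum crosses individual $\aleph_n$'s, and any proof of \ref{z7k} must isolate what is special about the threshold $\aleph_\omega$ (or $\aleph_{\omega_1}$) rather than run a uniform L\"owenheim--Skolem-style reflection as you propose; your sketch does not engage with these counterexamples at all. In short, the proposal correctly identifies the intended analogy with Morley-type transfer and the intended role of \ref{7.18}, but it proves nothing beyond what the paper already conjectures.
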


\begin{thesis}
\label{0.8}
1) Classification theory for such models resembles
more the case of $\mathbb L_{\omega_1,\omega}$ than the first order.

\noindent
2) If the continuum is too small we may get 
categoricity for ``incidental" reasons.

\noindent
3) In our context the parallel of categoricity and ``being in the low
   side of the main gap" seems to be equal.
\end{thesis}

\noindent
See \cite{Sh:h}; as support for this thesis, in \cite[\S5]{Sh:771} we prove:
\begin{theorem}
\label{0.9}
1) There is an $F_\sigma$-abelian group $G$ (i.e. an
$F_\sigma$-definition, in fact a very explicit definition) such that
$\mathbf V \models ``G$ is a free abelian group" iff $\mathbf V \models
2^{\aleph_0} < \aleph_{736}$.

\noindent
2) Moreover, it follows that if $\mathbf V \models ``G$ is not free" and
   $\bbP$ is a c.c.c. forcing notion \then \, the abelian group
   $G^{\mathbf V}$ is not free even in $\mathbf V^{\bbP}$.
\end{theorem}
\bigskip

\noindent
\underline{Comments}:  In the context of the previous theorem we cannot do
better than $F_\sigma$, but we may hope for some other examples which
is not a group or categoricity is not because of freeness.

\noindent
The proof gives
\begin{conclusion}
\label{0.10}
For any $n <\omega$ for some $F_\sigma$-abelian group $\gA,\gA$ is
categorical$_1$ in $\aleph_\alpha$ iff $\alpha \le n$.
\end{conclusion}

Note that if $\cf(\alpha) = \aleph_0$ so necessarily $\alpha > n$,
\then \, there are no forcing notions $\bbP,\bbQ$ as in \ref{z7f}(5),
hence categoricity fails by the definition.

A connection with the model theory is that by 
Hart-Shelah \cite{Sh:323} such things can also occur
in $\bbL_{\omega_1,\omega}$ whereas (by \cite{Sh:87a}, \cite{Sh:87b})
if $\bigwedge\limits_{n} (2^{\aleph_n} < 2^{\aleph_{n+1}})$ and
$\psi \in \mathbb L_{\omega_1,\omega}$ is categorical in every
$\aleph_n$, \then \, $\psi$ is categorical in every $\lambda$.  See
more in Shelah-Villaveces \cite{Sh:648} and in \cite{Sh:600}.  We 
may consider other interpretations of ``categoricity", more 
on \ref{0.8} - \ref{0.10}, see \S(0C) and \S4.

The parallels here are still open.

Those questions may cast some light on the thesis that non-first 
order logics are ``more distant" from the ``so-called" 
mainstream mathematics.  

This work originally was a section in \cite{Sh:771}; in it we try to look
at stability theory in this context, proving the modest (in
\ref{7.18}):
\mn
\begin{enumerate}
\item[$\boxplus$]  for ``$\aleph_0$-stable $\kappa$-Suslin models" 
the theorem on the existence of indiscernibles can be generalized.
\end{enumerate}
\mn
A natural question is
whether in the existence of indiscernibles (in Theorem \ref{7.18}) we
can start with a set of cardinality $\lambda$ rather than $\lambda^{+
\kappa^+}$.  Another natural question is whether categoricity implies
that the $\bbL_{\infty,\aleph_0}$-theory of $\gA$ is ``simple", clearly
by \S(0A) we are sure this holds.

Both questions (and maybe the one on the right parallel of stable)
may be addressed in a work in progress, \cite{Sh:F1134}.

We thank Udi Hrushovski and the referee for many helpful comments.
\bigskip

\subsection {The Content of the Present Work} \
\bigskip

Our context is a $\kappa$-candidate $(\gA,\Delta)$, so $\gA$ is
structure with the set of elements being a sets of reals, which is reasonably
definable, and to some extent similarly the relations the functions of
$\gA$ from $\Delta$ being reasonably definable: 
usually $\kappa$-Suslin; you can fix $\Delta$ as the set of
quantifier free formulas.  In the present work forcing does not
appear, so the universe is fixed.

In \S1; we give some basic definitions in Definition \ref{7.1} but note
that the $\aleph_0$-stable and $\aleph_0$-unstable are proved to
be complimentary only later.  The
main result is to get the ``end-extension indiscernibility existence"
lemma \ref{7.4}.  Using stability, we have that for any sequence
$\langle a_\alpha:\alpha <
\lambda\rangle$ we get a subsequence $\langle a_\alpha:\alpha \in
S\rangle,S \subseteq \lambda$ stationary such that the $\Delta$-type
of $\langle a_{\alpha_0},\dotsc,a_{\alpha_k}\rangle$ over
$\{a_\beta:\beta < \alpha_0\}$ does not depend on $\alpha_k$ when
$\alpha_0 < \ldots < \alpha_k$ are from $S$.  We then
improve it to ``does not depend" on
$\alpha_k,\alpha_{k-1},\dotsc,\alpha_{k-n}"$ for a fix $n$, but this
does not give full indiscernibility.  Note that the definition of
stability and its use in \ref{7.4} speak of definability of types but
the type are model theoretic ones, speaking only on $\Delta$-formulas
whereas the ``definable" means in set theoretic sense, so the
definition is arbitrary, just has to be in the submodel (of e.g. some
$(\cH(\chi),\in))$.  
This seems inherent in our framework: for a predicate $P \in
\tau(\gA)$, the set theoretic definition of $P^{\gA}$
may involve, as approximations, relations on the reals which are very
complicated model theoretically. 

In \S2 we generalize the theorem ``the order property implies unstability", in
\ref{7.7}; giving a criterion for unstability in \ref{7.7A}.  Now the
``unstable" in Definition \ref{7.1}(3) really speaks on having a
perfect set of types; we here define apparently weaker version
$(\mu,\Delta,\lambda)$-unstable.

Lastly, in \S3, we define ranks, but here the ranks are for subsets
$\mathbf I$ of ${}^m \gA$, not just definable ones; as explained above
this seems inherent in our framework.

We then prove (in \ref{7.14},\ref{7.17}) that ``$\gA$ is
$(\aleph_0,\Delta)$-stable" is really defined, i.e. that we have
several equivalent definitions: some from the structure side, some from
the non-structure side (generally on this see
\cite[\S(1A),\S(2A),\S(2B)]{Sh:E53}). 

Lastly, we prove a theorem on the existence of (fully) indiscernible
sub-sequence: a sequence of length $\lambda^{+ \kappa^+}$ has a
sub-sequence of length $\lambda$; a parallel situation occurs for strongly
dependent $T$ (by \cite{Sh:863}).  We end noting that, of course,
being $(\aleph_0,\Delta)$-unstable implies a failure of categoricity
(strong one) as expected in our frame.

On the existence of indiscernibles in stable models (not necessarily
of a stable theory) and history, see \cite{Sh:300b}.
\bigskip

\subsection {Further Comments} \
\bigskip

\begin{definition}
\label{0.12}
1) For a definition $\gA$ of a $\tau$-model (usually with a set of
   elements a definable set of reals) we say that $\gA$ is
categorical$_2$ in $\lambda < 2^{\aleph_0}$ \when \,: for some $A
\subseteq \lambda$: for every $A_1,A_2 \subseteq \lambda$ the 
models $\gA^{\mathbf L[A_1,A]},\gA^{\mathbf L[A_2,A]}$ 
are isomorphic (in $\mathbf V$). 

\noindent
2) For a class ${\gK}$ of forcing notions and cardinal $\lambda$
 we say $\gA$ is categorical$_2$ in $(\lambda,\gK)$ \when \, for
every $\bbP \in {\gK}$ satisfying\footnote{We may allow $\Vdash_{\bbP}
  ``2^{\aleph_0} \ge \lambda"$, but then for some $\bbP_1$ there is a
  $\bbP_2$-name $\name A \subseteq \lambda$ such that if $\bbP_1
\lessdot \bbP_2$ and $\bbP_2$-names $\name A_1,\name A_2$ of subset
 of $\lambda$ we have $\Vdash_{\bbP_2} 
``\name A^{\mathbf V[\name A_1,A_1]},\gA^{\mathbf V[\name A,
\name A_2]}$ are isomorphic".}
 $\Vdash_{\bbP} ``2^{\aleph_0} > \lambda"$, we have in 
$\mathbf V^{\bbP}$: the structure $\gA$ 
is categorical$_2$ in $\lambda$, i.e. in the sense of part (1).

\noindent
3) Note the $\lambda$ may be a given cardinal or definition of one.
\smallskip

Comparing Definition \ref{0.12}(1) with 
the forcing version we lose when $\mathbf V = \mathbf L$, as
it says nothing, we gain as (when $2^{\aleph_0} > \aleph_1$) we do not
have to go outside the universe.  
Maybe best is categorical in $\lambda$ in $\mathbf
V^{\bbP}$ for every c.c.c. forcing notion $\bbP$ making
$2^{\aleph_0} > \lambda$, see \S4.  Of course, the examples from
\ref{0.9} works also here.

Note also that it may be advisable in \ref{0.12}(1) 
to restrict ourselves to the case
$\lambda$ is regular as we certainly like to avoid the possibility
$(2^{\aleph_0})^{\mathbf L[A_1,r]} = \lambda < (2^{\aleph_0})^{\mathbf
L[A_2,r]}$ (see on this and for history in \cite[Ch.VII]{Sh:g}).
\end{definition}

Of course, any reasonably absolute definition of unstability implies
non-categoricity: if we have many types we should have a perfect set
of them, hence adding Cohen subsets of $\omega$ adds more types
realized.  If we add $\langle \eta_i:i < 2^{\aleph_0} \rangle$ Cohen
reals then $\langle \gA^{\mathbf V[\eta_i:i \in
A]}:A \subseteq 2^{\aleph_0} \rangle$ is a sequence of 
pairwise non-isomorphic over the
countable set of parameters, if we get $2^{2^{\aleph_0}}$ or just $>
2^{\aleph_0}$ non-isomorphic models, we can forget the parameters and retain our
``richness in models".  The present work is to some extent a continuation of
\cite{Sh:202}, \cite{Sh:522}, see history there.

\begin{remark}
\label{0.15}
For $\ell(*)=1,2$ letting $\kappa = \aleph_{\ell(*)-1}$, we 
may replace everywhere $\kappa$-Suslin, $\kappa,\kappa^+$ by
$\Sigma^1_{\ell(*)},\aleph_{\ell(*)-1},\aleph_{\ell(*)}$  respectively.
Presently, though the first (i.e. 
$\Sigma^1_1$) case is more restrictive than the
second $(\aleph_1$-Suslin) case, what we can prove is the same.  For
$\ell(*)=1$ we have real equivalence.
\end{remark}

\noindent
Note that below the indiscernible sequences are indiscernible sets,
justified by \ref{7.7} (see, e.g. \cite{Sh:c}).
\begin{claim}
\label{z20}
1) Assume $I$ is an infinite linear order $\bar a_s \in {}^m \gA$ for
   $s \in I,\langle \bar a_s:r \in I\rangle$ is a
$(\Delta,n)$-indiscernible sequence (see \ref{7.4a}) but not a
$(\Delta,n)$-indiscernible set (see \ref{7.4a}(9)).  \Then \, 
we can find $\varphi = 
\varphi(\bar x_0,\dotsc,\bar x_n,\bar y) \in \Delta$ and $\bar b
   \in {}^{\ell g(\bar y)} A$ and $\ell < n$ such that:  if $s_0 <_I
   \ldots <_I s_n$ then
\mn
\begin{enumerate}
\item[$(a)$]  $M \models \varphi[\bar a_{s_0},\dotsc,\bar
a_{s_n},\bar b]$
\sn
\item[$(b)$]  $M \models \varphi[\bar a_{t_0},\dotsc,\bar a_{t_n},\bar b]$
when
\sn
\begin{enumerate}
\item[$\bullet$]  $t_k = s_k$ if $k \ne \ell,\ell +1$
\sn
\item[$\bullet$]  $t_k = s_{k+1}$ if $k =\ell$
\sn
\item[$\bullet$]  $t_k = s_{k-1}$ if $k = \ell +1$.
\end{enumerate}
\end{enumerate}
\mn
2) So if $I = \lambda$, this implies the $(\lambda,\Delta)$-order
property.

\noindent
3) Similarly for end-$(\Delta,n)$-indiscernible sequence (but we may
   have to demand $s_0 \ge s_* \in I$ for a fix $s_*$.
\end{claim}

\noindent
Recall
\begin{definition}
\label{z17}
1) Let $\mathbf T_\kappa$ be the following tree:
\mn
\begin{enumerate}
\item[$(a)$]  members have the form $t= (\nu_t,\eta_t)$ such that for
some $n,\eta_t \in {}^n \omega,\nu_t \in {}^n \kappa\}$
\sn
\item[$(b)$]  it is ordered by $s \le_{\mathbf T_\kappa} t$ iff $\eta_s
\trianglelefteq \eta_t \wedge \nu_s \trianglelefteq \nu_t$.
\end{enumerate}
\mn
2) For a subtree $\cT$ of $\mathbf T_\kappa$ let $\proj(\cT) = \{\eta \in
{}^\omega \omega$: for some $\nu \in {}^\omega \kappa$ we have
$(\nu,\eta) \in \lim(\cT)\}$.

\noindent
3) A set of this form is called $\kappa$-Suslin.

\noindent
4) For notational convenience we may replace ${}^\omega \omega$ by
   ${}^\omega(\cH(\aleph_0))$ and consider $\bar a \in {}^m(\gA)$ as
$\langle \langle a_\ell(n):\ell < \ell g(\bar a)\rangle:n
   <\omega\rangle$ and let $\bar a \upharpoonleft n = \large< \langle
a_\ell(k):\ell < \ell g(\bar a)\rangle:k < n\large>$; 
(similarly we may replace ${}^\omega \kappa$ by
   ${}^\omega(\mathbf L_\kappa)$, too).
\end{definition}
\newpage

\section {Generalizing stability in $\aleph_0$}

We may consider the dividing line for abelian groups 
from \cite{Sh:402} and try to
generalize it for any simply defined (e.g. anywhere from a Polish
algebra until $\kappa$-Suslin) model.  We
deal with trying to prove that we have two possibilities, in the 
high, complicated
side we get a parallel of non $\aleph_0$-stability (hence strong
non-categoricity, see \ref{c19}); in the low side we
have a rank.  But even for minimal formulas, the example in
\cite[\S5]{Sh:771} shows
that we are far from being done, still we may be able to say something on the
structure.

We may consider also ranks parallel to the ones for superstable
theories.  Recall that there are two kinds of definability we are
considering: the model theoretic one and the set theoretic one.  

\begin{context}
\label{7.0}
1) If not said otherwise, ${\gA}$ will
be a structure with countable vocabulary and its set of elements is a
set of reals; usually a definition - see \ref{z7f}(1).

\noindent
2) ${\cL}$ is a logic.  We did not specify the logic; we may assume it is 
$\subseteq \bbL_{\aleph_1,\aleph_0}$ or just $\bbL_{\aleph_1,\aleph_0}
(\mathbf Q)$ where $\mathbf Q$ is the quantifier ``there
are uncountably many".

\noindent
3)  $\kappa$ is a fix cardinality, let a $\kappa$-model mean a
$\kappa$-Suslin model.
\end{context}

\begin{definition}
\label{a6}
1)  For a structure ${\gA}$, an ${\gA}$-formula or
$(\gA,\cL)$-formula $\varphi$ is a formula in the language 
${\cL}(\tau_{\gA})$ so in the logic $\cL$, and the formula
is in the vocabulary of 
${\gA}$ with finitely many free variables, writing $\varphi =
\varphi(\bar x)$ means that $\bar x$ is a finite sequence of variables with no
repetitions including the free variables of $\varphi$. 

\noindent
2) $\Delta$ denotes a set of such formulas and $\bar \varphi$ denotes a pair
$(\varphi_0(\bar x),\varphi_1(\bar x))$ of formulas so $\bar \varphi$
is a $\Delta$-pair when $\varphi_0,\varphi_1 \in \Delta$; in general
we do not require any closure properties on $\Delta$. 

\noindent
3) We say $\varphi$ (or $\Delta$ or $\bar \varphi$) is $\kappa$-Suslin
(or $\Sigma^1_1$ or $\Sigma^1_2$ or $\Delta^1_0$ (= Borel)) \Iff \, they 
are so as set theoretic formulas, see \ref{a9}(2).

\noindent
4) We say $\gA$ is a $\kappa$-model or a $\kappa$-Suslin model \when
 \, being a member, (that is, in its universe) 
its relation and its functions are
   $\kappa$-Suslin; so the definition of $\gA$ is the sequence of
   $\kappa$-Suslin definitions and saying, e.g. $\gA \in N \subseteq
(\cH(\chi),\theta)$ means that also this sequence of
   $\kappa$-Suslin definitions belongs to $N$.
\end{definition}

\begin{discussion}
\label{a9}
1) So if the relation ``$\eta \in \gA$", ``$(\eta_0,\dotsc,\eta_{n-1})
\in P^{\gA}$" where $P$ is an $n$-place predicate\footnote{pedantically we
   should change a little in \ref{z17}, or we can translate $P^{\gA}
   \subseteq {}^n({}^\omega \omega)$ to a subset of ${}^\omega
   \omega$} of $\tau_{\gA}$, are $\kappa$-Suslin, \then \, they are
   upward and downward absolute (as long as the relevant subtrees of
   $\mathbf T_\kappa$ are in the universe).

\noindent
2) When we consider a formula $\varphi(\bar x)$ this is more
complicated but we are assuming it, too, is $\kappa$-Suslin, meaning that
   this (set theoretic) definition defines $\varphi(\bar x)^{\gA}$
   also in the other universes we consider.
\end{discussion}

\begin{definition}
\label{7.1}
1) We say $({\gA},\Delta)$ is a $\kappa$-candidate (or $\kappa$-Suslin
candidate; as $\kappa$ is constant we may omit it) \when \,:
\mn
\begin{enumerate}
\item[$(a)$]   ${\gA}$ is a $\kappa$-model
\sn
\item[$(b)$]   $\Delta$ is a countable set of
$({\gA},{\cL})$-formulas\footnote{We may use ``$\Delta$ is a set of
pairs $\bar\varphi = (\varphi_0(\bar x),\varphi_1(\bar x)$ of such
formulas", and later demand $\bar\varphi \in \Delta$.  So far it
does not matter.}
which, are in the set theory sense, $\kappa$-Suslin (we 
identify $\varphi$ and $\neg \neg \varphi$), i.e. $\Delta$ contains
the information saying which $\kappa$-Suslin relations are used
\sn
\item[$(c)$]  we consider changes of the universe (say by forcing)
only when this is preserved, anyhow we can assume each $\varphi(\bar x)
\in \Delta$ is quantifier free.
\end{enumerate}
\mn
1A) We can replace being $\kappa$-Suslin by being $\Sigma^1_1$, 
by $\Sigma^1_2$, etc., (naturally we need enough absoluteness); if we
replace it by $\Gamma$ we write $\Gamma$-candidate.  If $\Gamma$ does
not appear we mean it is $\kappa$-Suslin or understood from the context.

\noindent
2) If $({\gA},\Delta)$ is a candidate we say ${\gA}$ is 
$(\aleph_0,\Delta)$-stable (or $({\gA},\Delta)$ is
$\aleph_0$-stable), \when \, $\Delta$ is
a countable set of $({\gA},{\cL})$-formulas and for 
$\chi$ large enough and for every countable
$N \prec ({\cH}(\chi),\in,<^*_\chi)$ to which $(\gA,\Delta)$ 
belongs and $\bar a \in {}^m {\gA}$ where $m < \omega$,
the following weak definability condition on tp$_\Delta(\bar a,N
\cap {\gA},{\gA})$ holds:
\mn
\begin{enumerate}
\item[$\boxplus$]  letting $\Phi^m_{\gA,\Delta}$ be defined in $(*)$,
for some function $\mathbf c \in N$ with domain
$\Phi^m_{({\gA},\Delta)}$ and range $\subseteq \{0,1\}$ 
the statement $(**)$ holds where
\sn
\begin{enumerate}
\item[${{}}$]   $(*) \quad$ letting $\Phi^m_{{\gA},\Delta} =
\Phi^m_{({\gA},\Delta)} = 
\{\bar \varphi(\bar x,\bar b):\bar \varphi(\bar x,\bar b) =
(\varphi_0(\bar x,\bar b), \varphi_1(\bar x,\bar b))$ 

\hskip25pt and $\bar x = \langle x_\ell:\ell < m
\rangle,\bar b \in {}^{\omega >}{\gA}$ and
$\varphi_0,\varphi_1 \in \Delta$ and

\hskip25pt  ${\gA} \models \neg(\exists \bar x,\bar y)
(\varphi_0(\bar x,\bar y) \wedge \varphi_1(\bar x,\bar y))\}$, 
\sn
\item[${{}}$]  $(**) \quad$ if $\bar \varphi = (\varphi_0(\bar x,\bar
b),\varphi_1(\bar x,\bar b)) \in \Phi^m_{({\gA},\Delta)} \cap N$
and $\ell < 2$ and 

\hskip25pt ${\gA} \models \varphi_\ell[\bar a,\bar b]$
\then \, $\ell = \mathbf c(\bar \varphi)$.
\end{enumerate}
\end{enumerate}
\mn
2A) We say $(\gA,\Delta)$ is $\mu$-stable \when \, above we replace
``$N$ countable" by ``$N$ of cardinality $\mu$ and $\mu +1 \subseteq
N"$.

\noindent
3) We say that $({\gA},\Delta)$ is $\aleph_0$-unstable (or ${\gA}$ 
is $(\aleph_0,\Delta)$-unstable) \when \,:
there are $\bar a_\eta \in {}^m {\gA}$ for $\eta \in {}^\omega 2$ 
and $\varphi_{\nu,0}(\bar x,\bar y_\nu) \in \Delta$ and 
$\varphi_{\nu,1}(\bar x,\bar y_\nu) \in 
\Delta$ and $\bar b_\nu \in {}^{\ell g(\bar y)} {\gA}$ for
$\nu \in {}^{\omega >} 2$ such that:
\mn
\begin{enumerate}
\item[$(a)$]   ${\gA} \models \neg(\exists \bar x)
(\varphi_{\nu,0}(\bar x,\bar b_\nu) \wedge \varphi_{\nu,1}(\bar x,\bar b_\nu))$
\sn
\item[$(b)$]    if $\nu \triangleleft \eta_0,
\nu \triangleleft \eta_1$ and $\eta_0,\eta_1 \in {}^\omega 2,
n = \ell g(\nu)$ and $\eta_0(n) = 0,\eta_1(n)=1$ \then \, 
${\gA} \models \varphi_{\nu,0}[\bar a_{\eta_0},\bar b_\nu] 
\wedge \varphi_{\nu,1}[\bar a_{\eta_1},\bar b_\nu]$.
\end{enumerate}
\mn
4) Let $\bar\varphi(\bar x,\bar y) \in \Phi^m_{(\gA,\Delta)}$ or
``$\bar\varphi(\bar x,\bar y)$ is a $\Delta$-pair" (pedantically
$(\gA,\Delta)$-pair) means
that $\bar\varphi(\bar x,\bar y) = (\varphi_0(\bar x,\bar
y),\varphi_1(\bar x,\bar y))$ and $\varphi_0(\bar x,\bar
y),\varphi_1(\bar x,\bar y)$ belongs to $\Delta$ and $\gA \models
\neg(\exists \bar x,\bar y)(\varphi_0(\bar x,\bar y) \wedge
\varphi_1(\bar x,\bar y))$.

\noindent
5) Let $\bar\varphi(\bar x,\bar y) \in \Delta$ means that
   $\bar\varphi(\bar x,\bar y) = (\varphi_0(\bar x,\bar
   y),\varphi_1(\bar x,\bar y))$ and $\varphi_0(\bar x,\bar
   y),\varphi_1(\bar x,\bar y)$ belongs to $\Delta$.
\end{definition}

\begin{remark}
\label{a15}
1) There are obvious absoluteness results (for $\bar \varphi \in
\Phi^m_{({\gA},\Delta)},({\gA},\Delta)$ is $\aleph_0$-unstable and
$\aleph_0$-stable).

\noindent
2) On those notions (stable and unstable) being complimentary 
see Theorem \ref{7.14}.

\noindent
3) We may consider in Definition \ref{7.1}(1) using countable models
or models of cardinality $\kappa$, it is not clear to us what is
better but see \ref{7.2} below.
\end{remark}

\begin{observation}
\label{7.2}
1) If $\Delta$ is closed under negation \then \, in Definition 
\ref{7.1}(2) we equivalently can replace $\boxplus$ by:
\mn
\begin{enumerate}
\item[$\boxplus'$]   for some $\mathbf c \in N$ we have: if $\varphi(\bar x,\bar
y) \in \Delta$ and $\bar b \in {}^{\ell g(\bar y)}{\gA}$ and $\bar b
\in N$ \then \, ${\gA} \models \varphi(\bar a,\bar b)$ iff
$\mathbf c(\varphi(\bar x,\bar b))=1$.
\end{enumerate}
\mn
2) In Definition \ref{7.1}(2) for all $\chi > 2^{\aleph_0}$ the
following variants of the definition are equivalent:
\mn
\begin{enumerate}
\item[$(a)$]  the original one
\sn
\item[$(b)$]  as there but omitting ``countable" and
$<^*_\chi$, (a well ordering of $\cH(\chi)$)
\sn
\item[$(c)$]  there is a club of countable $N \subseteq
  (\cH(\chi),\in)$ which satisfies the conclusion (i.e. there $\mathbf c
  \in N$ such that ...)
\sn
\item[$(d)$]  for each $\Delta$-pair $\bar\varphi(\bar x,\bar y)$
  there is a club $\cS \subseteq [\cH(\chi)]^{\aleph_0}$ such that for
every $u \in \cS$ letting $N_u = (\cH(\chi),\in) \rest A$ we have:
  for some function $\mathbf c \in N_A$
\sn
\begin{enumerate}
\item[$(\alpha)$]  $\Dom(\mathbf c) = \{\bar\varphi(\bar x,\bar b):\bar
  b \in {}^{\ell g(\bar y)}\gA\},\Range(\mathbf c) \subseteq \{0,1\}$
\sn
\item[$(\beta)$]  for every $\bar b \in N_A \cap {}^m \gA$ and $\ell <
  2$ if $\gA \models \varphi_\ell[\bar a,\bar b]$ then $\ell = \mathbf
  c(\bar\varphi(\bar x,\bar b)$.
\end{enumerate}
\sn
\item[$(e)$]  like clause (d) replacing ``club $\cS$" by ``stationary
  $\cS$".
\end{enumerate}
\mn
3) In Definition \ref{7.1}(2), the demand ``$N$ is countable" can be
omitted (and in \ref{7.1}(2A) can weaken $\|N\| = \mu$ to $\|N\| \ge \mu$).
\end{observation}

\begin{PROOF}{\ref{7.2}}
Straight; e.g. $(a) \Rightarrow (b)$ let 
$N^* \prec ({\cH}(\chi),\in,<^*_\chi)$ be uncountable and such
that ${\gA},\Delta \in N^*$.  Now for every countable $N \prec N^*$ to
which $({\gA},\Delta)$ belongs there is $\mathbf c_N \in N$ as 
mentioned in the definition \ref{7.1}.
Hence by normality of the club filter on
$[N^*]^{\aleph_0}$, the family of countable subsets of $N^*$,
for some $\mathbf c^*$ the set $\cS' =
\{N:N \prec N^*$ is countable and $\mathbf c_N = \mathbf c^*\}$ is a stationary
subset of $[N^*]^{\aleph_0}$, so $\mathbf c^*$ can serve for $N^*$. 
\end{PROOF}

\begin{claim}
\label{7.4}
\underline{The End-Extention Indiscernibility existence lemma}   
1) Assume:
\mn
\begin{enumerate}
\item[$(a)$]  $(\alpha) \quad \Delta$ is closed under negation
  \underline{or} just
\sn
\item[${{}}$]  $(\beta) \quad$ if $\gA \models ``\neg \varphi(\bar
  a,\bar b)"$ and $\varphi(\bar x,\bar y) \in \Delta$ then for some
  $\bar\varphi(\bar x,\bar y) \in \Phi^{\ell g(\bar x)}_{\gA,\Delta}$

\hskip25pt  we have $\varphi_0 = 
\varphi$ and $\gA \models \varphi_1[\bar a,\bar b]$
\sn
\item[$(b)$]  $\Delta$ is closed under permuting the (free)
variables, $m < \omega$ 
\sn
\item[$(c)$]   $(\gA,\Delta)$ is an $\aleph_0$-candidate which is
  $\aleph_0$-stable (or just $\mu$-stable, $\mu < \lambda$, see \ref{7.1}(2A)
\sn
\item[$(d)$]  $\aleph_0 < \lambda = \text{\rm cf}(\lambda)$ and $S
\subseteq \lambda$ is stationary
\sn
\item[$(e)$]  $\bar a_\alpha \in {}^m {\gA}$ for $\alpha < \lambda$
\sn
\item[$(f)$]  $A \subseteq {\gA}$ has cardinality $< \lambda$.
\end{enumerate}
\mn
\Then \, for some stationary $S' \subseteq
S$ the sequence $\langle \bar a_\alpha:\alpha \in S' \rangle$ is a
$\Delta$-end extension indiscernible sequence over $A$ in ${\gA}$
(see Definition \ref{7.4a}(3),(4),(5) below). 

\noindent
2) Moreover for any pregiven $n < \omega$ we can find stationary $S'
\subseteq S$ such that $\langle \bar a_\alpha:\alpha \in S' \rangle$
is $(\Delta,n)$-end extension indiscernible over $A$ in ${\gA}$. 

\noindent
3) In part (2) we can find a club $E$ of 
$\lambda$ and regressive function $f_n$
on $S \cap E$ for $n < \omega$ such that:
\mn
\begin{enumerate}
\item[$(i)$]    if $\alpha,\beta \in S \cap E$ \then \,
$f_{n+1}(\alpha) = f_{n+1}(\beta) \Rightarrow f_n(\alpha) =
f_n(\beta)$
\sn
\item[$(ii)$]    if $n < \omega$ and $\gamma < \lambda$, \then \,
the sequence $\langle \bar a_\alpha:\alpha
\in S \cap E,f_n(\alpha) = \gamma \rangle$ is $(\Delta,n)$-end
extension indiscernible sequence over $A$
\sn
\item[$(ii)^+$]   moreover, if $n < \omega$ and $\beta,\gamma < \lambda$
\then \, $\langle \bar a_\alpha:\alpha  \in S \cap E \backslash \beta$ and
$f_n(\alpha) = \gamma \rangle$ is $(\Delta,n)$-end extension
indiscernible sequence over $\cup \{\bar a_\gamma:\gamma < \beta\}
\cup A$.
\end{enumerate}
\end{claim}

\begin{remark}
\label{7.4y}
1) This is a ``first round on indiscernibility".

\noindent
2) The assumption ``$\Delta$ is closed under negation" is quite
strong.

\noindent
3) Really we get indiscernible sets by \ref{z20} and \ref{7.7}.

\noindent
4) The claim and proof are similar to 
\cite[Ch.III,4.23,pg.120-1]{Sh:c}.  But before proving we define:
\end{remark}

\begin{definition}
\label{7.4a}
1) Let $({\gA},\Delta)$ be a candidate.  
We say ``${\gA}$ has $(\lambda,\Delta)$-order" \when \,:
\mn
\begin{enumerate}
\item[$(*)_\lambda$]   for some $m(*) < \omega$ and 
$\bar \varphi(\bar x,\bar y) \in \Phi^{m(*)}_{{\gA},\Delta}$ with 
$\ell g(\bar x) = \ell g(\bar y)$, this formula linear orders some 
${\mathbf I} \subseteq {}^{m(*)} {\gA}$ of cardinality $\lambda$, see
part (2) for definition. 
\end{enumerate}
\mn
2) We say that the formula $\bar \varphi(\bar x,\bar y)$ linear orders $\mathbf I
\subseteq {}^{m(*)}{\gA}$ 
\Iff \, for some $\langle \bar a_t:t \in I \rangle$ we have:
\mn
\begin{enumerate}
\item[$(a)$]   $\mathbf I = \{\bar a_t:t \in I\}$
\sn
\item[$(b)$]   $I$ is a linear order
\sn
\item[$(c)$]   $\bar \varphi = (\varphi_0(\bar x,\bar y),
\varphi_1(\bar x,\bar y)) \in \Phi^m_{(\gA,\Delta)}$
\sn
\item[$(d)$]   if $s <_I t$ then ${\gA} 
\models \varphi_0[\bar a_s,\bar a_t] \wedge \varphi_1[\bar a_t,\bar a_s]$.
\end{enumerate}
\mn
3) For a linear order $J$ (e.g. a set of ordinals), we say $\langle
\bar a_t:t \in J \rangle$ is a $\Delta$-end-extension indiscernible
sequence (over $A$) \Iff \, for 
any $n < \omega$ and $t_0 <_J < \ldots <_J t_{n-1} <_J t_n$, 
the sequences $\bar a_{t_0} \char 94 \ldots 
\bar a_{t_{n-2}} \char 94 \bar a_{t_{n-1}}$ and $\bar a_{t_0} \char 94
\ldots \char 94 \bar a_{t_{n-2}} \char 94 \bar a_{t_n}$ realize the
same\footnote{We may consider ``not containing a contradictory pair".}
 $\Delta$-type (over $A$) in ${\gA}$. 

\noindent
4) We say that $\langle \bar a_t:t \in J \rangle$ is
$(\Delta,n_0,n_1)$-end-extension indiscernible sequence over $A$ in ${\gA}$
\when \,:
\mn
\begin{enumerate}
\item[$(a)$]   $J$ a linear order for some $m,\bar a_t \in
{}^m{\gA},A \subseteq {\gA}$
\sn
\item[$(b)$]   if $\langle r_\ell:\ell < n_0 \rangle,\langle
s_\ell:\ell < n_1 \rangle,\langle t_\ell:\ell < n_1 \rangle$ are
$<_J$-increasing sequences and $n_0 > 0 \wedge n_1 > 0$ implies 
$r_{n_0-1} <_J s_0,r_{n_0-1} <_J t_0$ \then 
\newline
$\bar a_{r_0} \char 94 \ldots \char 94 \bar a_{r_{n_0-1}}
\char 94 \bar a_{s_0} \char 94 \ldots \char 94 \bar a_{s_{n_1-1}}$ and
$\bar a_{r_0} \char 94 \ldots \char 94 \bar a_{r_{n_0-1}} \char 94
\bar a_{t_0} \char 94 \ldots \char 94 \bar a_{t_{n_1-1}}$ realizes the
same $\Delta$-type over $A$ in ${\gA}$
\sn
\item[$(c)$]   if $J$ has a last element we allow to decrease $n_0$
and/or $n_1$.
\end{enumerate}
5) If we omit $n_0$ this means for every $n_0$, (so ``$\Delta$-end
extension..." means $(\Delta,1)$-end extension.

\noindent
6) We say $\langle \bar a_t:t \in I\rangle$ is
   $(\Delta,n)$-indiscernible sequence over $A$ when it is an
   $(\Delta,0,n)$-indiscernible sequence over $A$.

\noindent
7) We say $\langle \bar a_t:t \in I\rangle$ is a
   $\Delta$-indiscernible sequence over $A$ \when \, it is an
$(\Delta,n)$-indiscernible sequence over $A$ for every $n$.

\noindent
8) In (4) if we replace above ``sequence" by ``set"; this means that for
   every permutation $\pi$ of $n_1$ in clause (4), also $\bar a_{r_0}
   \char 94 \ldots \char 94 \bar a_{r_{n_0-1}} \char 94 \bar
   a_{s_{\pi(0)}},\dotsc,\bar a_{s_{\pi(n_1-1)}}$ realizes the same
$\Delta$-type over $A$ in $\gA$; similarly in clause (c).

\noindent
9) Like (8) for (5), (6), (7).
\end{definition}

\begin{PROOF}{\ref{7.4}}
\underline{Proof of \ref{7.4}}  

\noindent
1) Let $\langle N_\alpha:\alpha < \lambda \rangle$ be an increasing
continuous sequence of elementary submodels of $({\cH}(\chi),
\in,<^*_\chi)$ to which ${\gA}$ belongs, such that
$\|N_\alpha\| < \lambda,N_\alpha \cap \lambda \in \lambda$ and $\alpha
\subseteq N_\alpha$ and $\langle \bar a_\alpha:\alpha < \lambda
\rangle \in N_0$ (hence $\bar a_\alpha \in N_{\alpha +1}$).  
For each $\alpha \in S$, applying \ref{7.2}(3)
to $N_\alpha,\bar a_\alpha$ we get $\mathbf c_\alpha \in N_\alpha$ as in
Definition \ref{7.1}(2).  
So for some $\mathbf c^*$ and some stationary subsets of $S' \subseteq S$ of
$\lambda$ we have $\alpha \in S' \Rightarrow \mathbf c_\alpha =
\mathbf c^*$. 

Now let $\beta_1 < \beta_2$ be from the set $S'$ and we shall prove
that $p_1 \subseteq p_2$ when we define $p_\ell = 
\tp_\Delta(\bar a_{\beta_\ell},\cup\{\bar a_\alpha:
\alpha < \beta_\ell\} \cup A,\gA)$ for
$\ell=1,2$.  So assume $\varphi = \varphi(\bar x,\bar y) \in
\Delta,\ell g(\bar x) = m(*)$ and $\bar b$ is a sequence of length
$\ell g(\bar y)$ from $\cup\{\bar a_\alpha:\alpha < \beta_1\} \cup A$
and $\ell \in \{1,2\}$ and assume $\gA \models \neg \varphi[\bar
  a_{\beta_\ell},\bar b]$ and it suffices to prove the $\gA \models
\neg \varphi[\bar a_{\beta_{3-\ell}},\bar b)$.

Now there is $\bar\varphi \in \Phi^{m(*)}_{\gA,\Delta}$ such that
$\varphi_0 \equiv \varphi$ and $\gA \models \varphi_1[\bar
  a_{\beta_\ell},\bar b]$.  Why?  Apply clause (a) of the assumption
of our claim \ref{7.4}.  If $(\alpha)$ there holds, that is, $\Delta$
is closed under negation choose $\bar\varphi(\bar x,\bar y) =
(\varphi_0(\bar x,\bar y),\neg \varphi_0(\bar x,\bar y))$, it is a
$(\gA,\Delta)$-pair and necessarily $\gA \models \varphi_1[\bar
  a_{\beta_\ell},\bar b]$; otherwise, that is sub-clause $(\beta)$
there holds so there is a $(\gA,\Delta)$-pair $\bar\varphi$ such that
$\varphi_0 = \varphi$ and $\gA \models \varphi_1[\bar
  a_{\beta_\ell},\bar b]$.

In both cases necessarily $\mathbf c_{\beta_\ell}(\bar\varphi(\bar
a_{\beta_\ell},\bar b)) = 1$ but $\mathbf c_{\beta_1} = \mathbf
c_{\beta_2}$ hence $\mathbf c_{\beta_{3-\ell}}(\bar\varphi(\bar
a_{\beta_{3-\ell}},\bar b))=1$ which recalling
$\boxplus(**)$ in Definition \ref{7.1} implies $\mathbf
c_{\beta_{3-\ell}}(\bar\varphi(\bar a_{\beta_{3-\ell}},\bar b) \ne 0$ 
so by the choice of
$\bar\varphi$ we have $\gA \models ``\neg \varphi_0[\bar
  a_{\beta_{3-\ell}},\bar b]"$ which means $\gA \models ``\neg
\varphi[\bar a_{\beta_{3-\ell}},\bar b]"$ as promised, so we are done.

\noindent
2) We prove this by induction on $n$, that is, we prove by induction
on $n$ that:
\mn
\begin{enumerate}
\item[$\boxtimes^n_\lambda$]   for all $m < \omega$,
a stationary $S \subseteq \lambda$ and $\bar a_\alpha \in {}^m{\gA}$ for
$\alpha < \lambda$ 
\underline{there is} a stationary $S' \subseteq S$ such that: 
if $\beta < \lambda,\alpha'_\ell \in S',\alpha''_\ell \in S'$ for
$\ell < n$ and $\beta \le \alpha'_0 < \alpha'_1 < \ldots$ and $\beta
\le \alpha''_0 < \alpha''_1 < \ldots$ then $\bar a_{\alpha'_0} \char
94 \ldots \bar a_{\alpha'_{n-1}},\bar a_{\alpha''_0} \char 94 \ldots
\char 94 \bar a_{\alpha''_{n-1}}$ realizes the same $\Delta$-type over 
$\cup\{\bar a_\gamma:\gamma < \beta\} \cup A$.
\end{enumerate}
\mn
For $n=0$ the demand is empty so $S'=S$ is as required.  For $n=1$
apply the proof of part (1).  
For $n+1 > 1$, by the induction hypothesis we can find a
stationary $S_1 \subseteq S$ as required in $\boxtimes^n_\lambda$.  For
each $\alpha < \lambda$ we can choose $\beta_{\alpha,\ell} =
\beta(\alpha,\ell)$ for $\ell \le n$ such that $\alpha =
\beta_{\alpha,0} < \beta_{\alpha,1} < \ldots < \beta_{\alpha,n}$ and
$0 < \ell \le n \Rightarrow \beta_{\alpha,\ell} \in S_1$.  Let $\bar
a^*_\alpha = \bar a_{\beta_{\alpha,0}} \char 94 \ldots \char 94 \bar
a_{\beta_{\alpha,n}}$ so $\bar a^*_\alpha \in {}^{m(n+1)}{\gA}$
and apply the induction hypothesis or just the case $n=1$, i.e. part (1)
 to $m \times (n+1),S_1,\langle \bar
a^*_\alpha:\alpha < \lambda \rangle$ getting a stationary $S_2
\subseteq S_1$ as required in $\boxtimes^n_\lambda$ or just in
$\boxtimes^1_\lambda$. 

\Wilog \,
\mn
\begin{enumerate}
\item[$(*)_1$]  for every $\gamma \in S_2$ and $\alpha \in \gamma$ 
we have $\beta_{\alpha,n} < \gamma$ hence
\sn
\item[$(*)_2$]  if $\gamma \in S_2$ then $\bigcup\limits_{\alpha <
  \gamma} \bar a^*_\alpha \subseteq \bigcup\limits_{\alpha < \gamma}
\bar a_\alpha$, moreover, $\alpha < \gamma \wedge 0 < \ell \le n
\Rightarrow \beta_{\alpha,\ell} < \gamma$.
\end{enumerate}
\mn
We claim that $S_2$ is as required.  So assume $\beta \le \alpha'_0 <
\ldots < \alpha'_n < \lambda$ and $\beta \le \alpha''_0 < \ldots
< \alpha''_n < \lambda$ and $\alpha'_\ell,\alpha''_\ell \in S_2$ for
$\ell \le n$.  

Now, recalling $\beta(\alpha,\ell) = \beta_{\alpha,\ell}$ for $\ell \le
n,\alpha < \lambda$ we have:
\mn
\begin{enumerate}
\item[$(i)$]    $\bar a_{\alpha'_0} \char 94 \bar a_{\alpha'_1} \char 94
\ldots \char 94 \bar a_{\alpha'_n}$ and $\bar a_{\alpha'_0} \char 94
\bar a_{\beta(\alpha'_0,1)} \char 94 \ldots \char 94 \bar
a_{\beta(\alpha'_0,n)}$ realizes the same $\Delta$-type over $\cup
\{\bar a_\gamma:\gamma < \beta\} \cup A$ in ${\gA}$.
\end{enumerate}
\mn
[Why?  As $\ell \in \{1,\dotsc,n\} \Rightarrow
\beta(\alpha'_1,\ell) \in S_1 \wedge (\alpha'_\ell \in S_2 \subseteq
S_1)$ and apply the choice of $S_1$ with $\alpha'_0 +1$ here standing
for $\beta$ there.]
\mn
\begin{enumerate}
\item[$(ii)$]   $\bar a_{\alpha'_0} \char 94 \bar
a_{\beta(\alpha'_0,1)} \char 94 \ldots \char 94 \bar
a_{\beta(\alpha'_0,n)}$ is equal to $\bar a^*_{\alpha'_0}$.
\end{enumerate}
\mn
[Why?  By the choice of $\bar a^*_{\alpha'_0}$.]
\mn
\begin{enumerate}
\item[$(iii)$]    $\bar a^*_{\alpha'_0},\bar a^*_{\alpha''_0}$
realizes the same $\Delta$-type over $\cup\{\bar a^*_\gamma:\gamma
< \beta\} \cup A$ hence over $A \cup \{\bar a_\gamma:\gamma < \beta\}$.
\end{enumerate}
\mn
[Why?  By the choice of $S_2$]. 

Similarly
\mn
\begin{enumerate}
\item[$(iv)$]    $\bar a^*_{\alpha''_0}$ is equal to
$\bar a_{\alpha''_0} \char 94 \bar
a_{\beta(\alpha''_0,1)} \char 94 \ldots \char 94 \bar
a_{\beta(\alpha''_0,n)}$ 
\sn
\item[$(v)$]    $\bar a_{\alpha''_0}
\char 94 \bar a_{\beta(\alpha''_0,1)} \char 94 \ldots \char 94 \bar
a_{\beta(\alpha''_0,n)}$ and $\bar a_{\alpha''_0} \char 94 \bar a_{\alpha''_1}
\char 94 \ldots \char 94 \bar a_{\alpha''_n}$ 
realizes the same $\Delta$-type over $\cup\{\bar a_\gamma:\gamma <
\beta\} \cup A$.
\end{enumerate}
\mn
By (i)-(v) recalling 
$(*)_2$ the set $S_2$ is as required in $\boxtimes^{n+1}_\lambda$. 

\noindent
3) The proofs of parts (1), (2) actually give this.
\end{PROOF}

\begin{claim}
\label{b32}
In Theorem \ref{7.4}(3) we can add:
\mn
\begin{enumerate}
\item[$(iii)$]  we have (where $e_n$ and $\langle S_{n,\gamma}:\gamma
  < \lambda\rangle$ are defined below)
\sn
\begin{enumerate}
\item[$(\alpha)$]  if $\gamma_1,\gamma_2 < \lambda$ are
  $e_n$-equivalent and $S_{n+1,\gamma_1},S_{n+1,\gamma_0}$ are
stationary \then \, for some $\Delta$-formula $\varphi(\bar
  x_0,\dotsc,\bar x_n,\bar y)$ and sequence $\bar b$ of length $\ell
  g(\bar y)$ from $\cup\{\bar a_\alpha:\alpha <
  \min\{\gamma_1,\gamma_2\}\} \cup A$ we have:
\sn
\item[${{}}$]  $\bullet \quad$ if $\alpha_0 < \ldots < \alpha_n$ are
  from $S_{n+1,\gamma_1} \cup S_{n+1,\gamma_2}$ then $\gA \models
  \varphi[\bar a_{\alpha_0},\dotsc,\bar a_{\alpha_n},\bar b]$ 

\hskip25pt  iff $\alpha_0 \in S_{n+1,\gamma_2}$ where
\sn
\item[$(\beta)$]   $e_n$ is the equivalent relation
  $\{(\gamma_1,\gamma_2)$: there are $\alpha_1 > \gamma_1$ and
  $\alpha_2 > \gamma_2$ (but $< \lambda$) such that $f_n(\alpha_1) =
  f_n(\alpha_2)$ but $f_{n+1}(\alpha_1) = \gamma_1,f_{n+1}(\alpha_2) =
  \gamma_2\}$
\sn
\item[$(\gamma)$]   $S_{n,\gamma} = \{\alpha < \lambda:\alpha >
  \gamma$ and $f_n(\alpha) = \gamma\}$.
\end{enumerate}
\end{enumerate}
\end{claim}

\begin{PROOF}{\ref{b32}}
By induction on $n$.  We choose $f'_n$, a club $E'_n$ and so
$e_n,\langle S_{n,\gamma}:\gamma < \lambda\rangle$ such that:
\mn
\begin{enumerate}
\item[$\boxplus$]  $(a) \quad E_n \subseteq E$
\sn
\item[${{}}$]  $(b) \quad E'_n,\langle f'_\ell:\ell \le n\rangle$ are
  as required in \ref{7.4}(3)
\sn
\item[${{}}$]  $(c) \quad S_{n,\gamma}$ are defined as in
  $(iii)(\gamma)$ above from $f'_n$
\sn
\item[${{}}$]  $(d) \quad e_\ell$ for $\ell < n$ is
 defined from $f'_\ell,f'_{\ell +1}$ as in $(\beta)$ above.
\end{enumerate}
\mn
There are no problems.
\end{PROOF}
\newpage

\section {Order and unstability}

\begin{lemma}
\label{7.7}
\underline{The order/unstability lemma}:

Assume that
\mn
\begin{enumerate}
\item[$\boxtimes_1$]   $(a) \quad
({\gA},\Delta)$ is a $\kappa$-candidate, (e.g. $\ell(*) \in
\{1,2\},\kappa = \aleph_{\ell(*)-1}$ the relation

\hskip25pt  are $\Sigma^1_{\ell(*)}$ hence a $\kappa$-Suslin)
\sn
\item[${{}}$]  $(b) \quad \varphi_0(\bar x,\bar y),\varphi_1(\bar x,\bar y)
\in \Delta$ are contradictory in ${\gA}$
\sn
\item[${{}}$]  $(c) \quad J$ is a linear order of cardinality $\lambda$
\sn
\item[${{}}$]   $(d) \quad \bar a_t \in {}^m {\gA}$ for $t \in J$
\sn
\item[${{}}$]  $(e) \quad {\gA} \models 
\varphi_0[\bar a_s,\bar a_t]$ and $\gA \models \varphi_1[\bar a_t,\bar a_s]$

\hskip25pt whenever $s <_J t$
\sn
\item[$\boxtimes_2$]   $(a) \quad \lambda \ge \aleph_{\kappa^+}$ 
\underline{or} 
\sn
\item[${{}}$]  $(b) \quad J$ is with density $\mu < |J|$ 
and $\kappa < |J|$.
\end{enumerate}
\mn
\Then \, $({\gA},\Delta)$ is $\aleph_0$-unstable; 
even more specifically the demand in Definition \ref{7.1}(3)
holds with $\varphi_{\nu,0} = \varphi_0,\varphi_{\nu,1} = \varphi_1$.
\end{lemma}

\begin{question}
\label{b6}
What can $\{\lambda:{\gA}$ has a $(\Delta,\lambda)$-order, that is
$\boxplus_1$ of \ref{7.7} holds for $J=\lambda\}$ be?
\end{question}

\begin{remark}
\label{b9}
1) Note that we cannot hope for too much (for \ref{b6}), because a
   specific case is:

$\{\lambda$: in the Boolean algebra $\cP(\bbN)/$ finite there is an
   increasing chain of length $\lambda\}$.

\noindent
2) However, if we replace $J \cong \lambda$ by $|J|=\lambda$, then we
   have something to say.
\end{remark}

\noindent
We first prove a claim from which we can derive the lemma.
\begin{claim}
\label{7.7A}
If $\boxplus$ below holds, then the pair
$({\gA},\Delta)$ is $\aleph_0$-unstable; moreover upward absolutely so
(and downward if $(\gA,\Delta)$ are in the submodel) \where \,:
\mn
\begin{enumerate}
\item[$\boxplus$]  $(a) \quad ({\gA},\Delta)$ is a
$\kappa$-candidate and $m < \omega,\Phi = 
\Phi^m_{({\gA},\Delta)}$ or just $\Phi \subseteq
\Phi^m_{({\gA},\Delta)}$
\sn
\item[${{}}$]   $(b) \quad \bar{\cP} = \langle {\cP}_\alpha:\alpha <
\kappa^+ \rangle$
\sn
\item[${{}}$]   $(c) \quad {\cP}_\alpha$ is a 
non-empty family of subsets of ${}^m{\gA}$
\sn
\item[${{}}$]   $(d) \quad$ if $\alpha < \beta < \kappa^+$ and $\mathbf I \in 
{\cP}_\beta$ \then \, for some $\mathbf I_0,\mathbf I_1 \in {\cP}_\alpha$
which are

\hskip25pt  subsets of $\mathbf I$ and some pair
$(\varphi_0(\bar x,\bar b),\varphi_1(\bar x,\bar b)) \in \Phi$ we have

\hskip25pt $\ell < 2$ and $\bar a \in \mathbf I_\ell \Rightarrow {\gA} \models
\varphi_\ell(\bar a,\bar b)$
\sn
\item[${{}}$]   $(e) \quad$ if $\mathbf I \in {\cP}_\beta$ and $\alpha < \beta <
\kappa^+$ and $F$ is a function with domain $\mathbf I$ and

\hskip25pt  range of cardinality $\le \kappa$, \then \, there is $\mathbf I' \in
{\cP}_\alpha$ such that

\hskip25pt  $\mathbf I' \subseteq \mathbf I$
and $F \restriction \mathbf I'$ is constant.
\end{enumerate}
\end{claim}

\begin{PROOF}{\ref{7.7}}
\underline{Proof of \ref{7.7} from \ref{7.7A}}
\bigskip

\noindent
\underline{Case 1}:  $\lambda \ge \aleph_{\kappa^+}$ 

For $\alpha < \kappa^+$ we let

\[
{\cP}_\alpha = \{\mathbf I:\mathbf I \subseteq {}^m {\gA} \text{ is
linearly ordered by } \bar \varphi \text{ and has cardinality }
\ge \kappa^{+ \alpha}\}.
\]
\bigskip

\noindent
\underline{Case 2}:  $J$ has density $\mu,|J| > \mu + \kappa$

For $\alpha < \kappa^+$ we let

\begin{equation*}
\begin{array}{clcr}
{\cP}_\alpha = \{\mathbf I:&\mathbf I \subseteq {}^m {\gA} \text{ is
linearly ordered by } \bar \varphi \text{ getting an order of
cardinality} \\
  &> \mu + \kappa \text{ and density } \le \mu\}.
\end{array}
\end{equation*}

\mn
This should be clear.

By $\boxtimes_2$ of the assumption, at least one of the cases holds.
\end{PROOF}

\begin{PROOF}{\ref{7.7A}}
\underline{Proof of \ref{7.7A}}  

For each $\varphi(\bar x) \in \Delta$
as $\{\bar a \in {}^{\ell g(\bar x)} {\gA}:{\gA} \models
\varphi[\bar a]\}$ is a $\kappa$-Suslin set and\footnote{Pedantically
note that we sometimes consider $\bar a \in {}^{\ell g(\bar x)}\gA$ as
a member of ${}^\omega \omega$ or better ${}^\omega(\cH(\aleph_0))$
 rather than ${}^{\ell g(\bar x)}({}^\omega \omega)$ or see \ref{z17}(4).} 
\mn
\begin{enumerate}
\item[$(*)_{\varphi(\bar x)}$]  we can find 
$\langle \cT_\varphi:\varphi \in \Delta\rangle$ such that (recalling
  \ref{z17}(4)) 
\sn
\begin{enumerate}
\item[$(a)$]  if $\alpha < \kappa$ then $\cT_\varphi$ is a closed subtree of 
$\mathbf T_\kappa$, see \ref{z17}   
\sn
\item[$(b)$]  $\{\bar a \in {}^{\ell g(\bar x)}{\gA}:{\gA} \models
\varphi[\bar a]\} = \{\bar a$: some $\nu \in {}^\omega \kappa$ witness
$\varphi[\bar a]$, that is $(\bar a,\nu) \in \lim(\cT_\varphi)\}$.
\end{enumerate}
\end{enumerate}
\mn
We can find functions $F_\varphi$ 
such that if $\varphi(\bar x) \in \Delta$ and ${\gA} \models
\varphi[\bar a]$ then $F_\varphi(\bar a) \in {}^\omega \kappa$
witnesses this.
For notational simplicity and \wilog \, $m=1$. Let $W =
\{w:w \subseteq {}^{\omega >} 2$ is a front\footnote{i.e. for every
$\eta \in {}^\omega 2$ there is one and only one $n < \omega$ such
that $\eta \rest n \in w$} hence finite$\}$.

For $w \in W$ and $n < \omega$ let 
$Q_{n,w}$ be the family of objects ${\gx} = (w,n,\bar u,\bar
\nu,\bar\varrho,\bar\varphi) = (w_{\gx},n_{\gx},\ldots)$ such that:
\mn
\begin{enumerate}
\item[$(*)_{\gx}$]   for unboundedly many $\alpha < \kappa^+$ 
we can find a witness (or an $\alpha$-witness)
$\mathbf y = (\langle \bar a_\ell:\ell < n \rangle,\langle B_\rho:\rho \in w
\rangle)$ for $\gx$ which means:
\sn
\begin{enumerate}
\item[$(a)$]  $(\alpha) \quad \bar u = \langle (u^0_\rho,u^1_\rho):
\rho \in w \rangle$ 
\sn
\item[${{}}$]  $(\beta) \quad$ if $\rho \in w$ then
$u^0_\rho,u^1_\rho \subseteq n$ are disjoint
\sn
\item[${{}}$]  $(\gamma) \quad \bar \varphi = \langle
 \bar\varphi^\ell:\ell < n \rangle$ where $\bar \varphi^\ell =
(\varphi^\ell_0(\bar x,\bar y_\ell),\varphi^\ell_1(\bar x,\bar y_\ell)) \in 
\Phi^m_{(\gA,\Delta)}$
\sn
\item[$(b)$]  $\bar a_\ell \in {}^{\ell g(\bar y_\ell)}{\gA}$
\sn
\item[$(c)$]   $B_\rho \in {\cP}_\alpha$ for $\rho \in w$
\sn
\item[$(d)$]   if $\rho \in w,\bar b \in B_\rho$ and $\ell < n$ then
$(\varphi^\ell_0(\bar x,\bar a_\ell),\varphi^\ell_1(\bar x,\bar a_\ell)) \in
\Phi^m_{(\gA,\Delta)},\ell g(\bar x) = m,\ell g(\bar y_\ell)$ 
arbitrary (but finite) and
\sn
\item[${{}}$]  $(\alpha) \quad \ell \in u^0_\rho 
\Rightarrow {\gA} \models \varphi^\ell_0[\bar b,\bar a_\ell]$
\sn
\item[${{}}$]  $(\beta) \quad \ell \in u^1_\rho 
\Rightarrow {\gA} \models \varphi^\ell_1[\bar b,\bar a_\ell]$
\sn
\item[$(e)$]   if $\nu \ne \rho$ are from $w$ then 
$(u^0_\rho \cap u^1_\nu \ne \emptyset) \vee (u^0_\nu \cap
u^1_\rho \ne \emptyset)$
\sn
\item[$(f)$]    $(\alpha) \quad \bar \nu = \langle \nu^i_{\rho,\ell}:\rho \in
w,i \in \{0,1\}$ and $\ell \in u^i_\rho \rangle$
\sn
\item[${{}}$]  $(\beta) \quad \bar\varrho = \langle
\varrho^i_{\rho,\ell}:\rho \in w,i \in \{0,1\}$ and $\ell \in
u^i_\rho\rangle$
\sn
\item[${{}}$]  $(\gamma) \quad \nu^i_{\rho,\ell} \in {}^{\omega
  >}\omega$ pedantically $\in {}^{\omega >}\cH(\aleph_0)$
\sn
\item[${{}}$]  $(\delta) \quad \varrho^i_{\rho,\ell} \in {}^{\omega >}\kappa$
\sn
\item[$(g)$]   if $\rho \in w,\bar b \in B_\rho,i \in 
\{0,1\},\ell \in u^i_\rho$ then $\varrho^i_{\rho,\ell} \triangleleft
F^0_{\varphi^\ell_i}(\bar b,\bar a_\ell)$ and $\nu^i_{\rho,\ell}
\triangleleft (\bar b \char 94 \bar a_\ell)$, see \ref{z17}(4).
\end{enumerate}
\end{enumerate}
\mn
Clearly
\mn
\begin{enumerate}
\item[$\boxplus_1$]    $Q_{0,\{<>\}} \ne \emptyset$. 
\end{enumerate}
\mn
[Why?  Let ${\gx}$ be such that $w_{\gx} = \{\langle \rangle\},n_{\gx} = 0,\bar
  u_{\gx} = \langle \rangle,\bar\varrho_{\gx} = \langle
  \rangle,\bar\nu_{\mathbf x} = \langle \rangle$ and
  $\bar\varphi_{\gx} = \langle \rangle$
and if $\alpha < \kappa^+$ choose $\mathbf I \in {\cP}_\alpha$ we let 
$B_{<>} = \mathbf I$; recall that by clause (c) of Claim \ref{7.7A} the
family $\cP_\alpha$ is non-empty.]

Next (the aim is to increase the length of the $\nu$'s (so
$\gy = \gx$ cannot work)
\mn
\begin{enumerate}
\item[$\boxplus_2$]   if ${\gx} \in Q_{n,w}$ and $\rho \in w$ \then \,
there is $\gy$ such that:
\sn
\begin{enumerate}
\item[$\bullet$]  $\gy \in Q_{n,w}$
\sn
\item[$\bullet$]  $\bar u_{\gy} = \bar u_{\gx}$
\sn
\item[$\bullet$]  $\nu^i_{\gx,\rho,\ell} \triangleleft
\nu^i_{\gy,\rho,\ell}$
\sn
\item[$\bullet$]  $\varrho^i_{\gy,\rho,\ell} \triangleleft
\varrho^i_{\gx,\rho,\ell}$
\sn
\item[$\bullet$]  $\bar\varphi_{\gy} = \bar\varphi_{\gx}$.
\end{enumerate}
\end{enumerate}
\mn
[Why?  As ${\gx} \in Q_{n,w}$ we know that for some unbounded
$Y \subseteq \kappa^+$ for 
each $\alpha \in Y$ there is an $\alpha$-witness $\mathbf y_\alpha = (\langle
\bar a^\alpha_\ell:\ell < n \rangle,\langle B^\alpha_\rho:\rho \in 
w \rangle)$ as required in $(*)_{\gx}$.  
Let $\alpha < \kappa^+$ and $\beta(\alpha) = \text{ Min}
(Y \backslash (\alpha +1))$.  Now for each $\rho \in w$ we have 
$B^{\beta(\alpha)}_\rho \in {\cP}_{\beta(\alpha)}$.

Let $k > \sup\{\ell g(\nu^i_{\gx,\rho,\ell}),\ell
g(\varrho^i_{\gx,\rho,\ell}):i < 2,\rho \in w$ and
$\ell < n\}+1$ and, of course, the set
$\{F_{\varphi^\iota_{\gx,\ell}}(\bar b,\bar a^{\beta(\alpha)}_\ell) \rest
k:\bar b \in B^{\beta(\alpha)}_\rho\}$ has cardinality $\le \kappa$ 
for each $i < 2,\iota < 2,\rho \in w,\ell \in u^i_{\gx,\rho}$.  
Hence by clause (e) of Claim \ref{7.7A} recalling $\alpha < \beta(\alpha)$
we can find a subset $B_{\alpha,\rho}$ of
$B^{\beta(\alpha)}_\rho$ from $\cP_\alpha$ and
$\nu^{\alpha,i}_{\rho,\ell} \in {}^{\omega >}\cH(\aleph_0),
\varrho^{\alpha,i}_{\rho,\ell} \in {}^{\omega >}\kappa$
such that
\mn
\begin{enumerate}
\item[$\bullet$]   $\bar b \in B_{\alpha,\rho} \Rightarrow 
(\bar b \char 94 \bar a_{\gx,\ell}) \upharpoonleft k = \nu^\iota_{\rho,\ell}$
\sn
\item[$\bullet$]  $\bar b \in B_{\alpha,\rho} \Rightarrow
F_{\varphi^\ell_{\gx,\iota}}(\bar b,\bar a_{\gx,\ell}) \rest k
= \varrho^{\alpha,i}_{\rho,\ell}$.
\end{enumerate}
\mn
Similarly for some $\langle \nu^{*,i}_{\rho,\ell},
\varrho^{*,i}_{\rho,\ell}:i < 2,\rho \in w,\ell \in 
u^i_{\gx,\rho}\rangle$ we have
\mn
\begin{enumerate}
\item[$\bullet$]  $Y' = \{\alpha \in Y:\nu^{\alpha,i}_{\rho,\ell} =
\nu^{*,\iota}_{\rho,\ell}$ and $\varrho^{\alpha,i}_{\rho,\ell} =
\rho^{*,i}_{\rho,\ell}$ for $\rho \in w,\iota \in \{0,1\}$ end $\ell
\in u^i_\rho\}$ is unbounded in $\kappa^+$.
\end{enumerate}
\mn
Now it is easy to choose $\gy$.]
\mn
\begin{enumerate}
\item[$\boxplus_3$]    if ${\gx} \in Q_{n,w}$ and $\rho \in w$ and
$v = (w \backslash \{\rho\}) \cup \{\rho \char 94 \langle 0 \rangle,
\rho \char 94 \langle 1 \rangle\}$ so $v \in W$, 
\then \, there is ${\mathbf y} \in Q_{n+1,v}$ such that:
\sn
\begin{enumerate}
\item[$(\alpha)$]   $\bullet \quad n_{\mathbf y} = n_{\gx} +1$
\sn
\item[$(\beta)$]  $\bullet \quad u^i_{\mathbf y,\eta} =
u^i_{\gx,\eta}$ for $\eta \in w \backslash \{\rho\},i = 0,1$
\sn
\item[${{}}$]  $\bullet \quad u^i_{\mathbf y,\rho \char 94 <j>} \cap
\{0,\dotsc,n-1\} = u^i_{\gx,\rho}$ for $i = 0,1$ and $j=0,1$
\sn
\item[${{}}$]  $\bullet \quad n \in u^i_{\mathbf y,\rho \char 94 <j>}
  \Leftrightarrow j = i$ for $i,j \in \{0,1\}$
\sn
\item[$(\gamma)$]  $\bullet \quad \nu^i_{\mathbf y,\eta,\ell} =
\nu^i_{\gx,\eta,\ell}$ for $\eta \in w \backslash \{\rho\}$
\sn
\item[${{}}$]  $\bullet \quad \nu^i_{\mathbf y,\rho \char 94 <\iota>,\ell} =
\nu^i_{\gx,\rho,\ell}$
\sn
\item[$(\delta)$]  $\bullet \quad \varrho^i_{\mathbf y,\eta,\ell} =
\varrho^i_{\gx,\eta,\ell}$ for $\eta \in w \backslash \{\rho\}$ and
$\ell < m$
\sn
\item[${{}}$]  $\bullet \quad \varrho^i_{\mathbf y,\rho \char 94 <\iota>,\ell}
= \varrho^i_{\gx,\rho,\ell}$ for $\ell < n$
\sn
\item[${{}}$]  $\bullet \quad \varrho^i_{\mathbf y,\rho \char 94 <\iota>,\ell}
= \langle \rangle$ for $i = \iota,\ell=n_{\gx}$
\sn
\item[$(\varepsilon)$]  $\bar\varphi_{\gy} \rest n =
\bar\varphi_{\gx}$.
\end{enumerate}
\end{enumerate}
\mn
[Why?  Now we use clause (d) of the assumption of Claim \ref{7.7A} but
  we shall elaborate.  Choose $Y$ and $\mathbf y_\alpha = (\langle
  a^\alpha_\ell:\ell < n\rangle,\langle B^\alpha_\rho:\rho \in w\rangle)$ for
  $\alpha \in Y$ as in the proof of $\boxplus_2$.  For each $\alpha
< \kappa^+$ let $\beta(\alpha) = \min(Y \backslash (\alpha +1)))$ and
we apply clause (d) of $\oplus$ of \ref{7.7A} with
  $\alpha,\beta(\alpha),B^{\beta(\alpha)}_\rho$ here standing for
$\alpha,\beta,\mathbf I$ there.

So there are $\bar\varphi(\bar x,\bar c),\mathbf I_0,\mathbf I_1$ such
that:
\mn
\begin{enumerate}
\item[$\boxplus_{3.1}$]  $(a) \quad \bar\varphi(\bar x,\bar c) \in
  \Phi^m_{(\gA,\Delta)}$
\sn
\item[${{}}$]  $(b) \quad \mathbf I_0,\mathbf I_1 \subseteq
  B^{\beta(\alpha)}_\rho$
\sn
\item[${{}}$]  $(c) \quad \mathbf I_0,\mathbf I_1 \in \cP_\alpha$
\sn
\item[${{}}$]  $(d) \quad$ if $j < 2$ and $\bar a \in \mathbf I_j$ then
  $\gA \models \varphi_j[\bar a,\bar c]$.
\end{enumerate}
\mn
Now $\gy$ is almost defined in $\boxplus_3$, the missing points are:
\mn
\begin{enumerate}
\item[$\bullet$]  we choose $\bar\varphi_{\gy,n}(\bar x,\bar y_n)$ as
  $\bar\varphi$
\sn
\item[$\bullet$]  we choose $\bar a_{\gy,n}$ as $\bar c$.
\end{enumerate}
\mn
Now it suffices for every $\alpha < \kappa$ to choose an
$\alpha$-witness $\mathbf y'_\alpha$ of $\gy$, it is defined by
\mn
\begin{enumerate}
\item[$\bullet$]  $\bar a_\ell = a^{\beta*(\alpha)}_\ell$ for $\ell < n$
\sn
\item[$\bullet$]  $\bar a_\ell = \bar c$ for $\ell < n$
\sn
\item[$\bullet$]  $B_\eta$ is $B^{\beta(\alpha)}_\eta$ if $\eta
  \in \omega \backslash \{\rho\}$ and is $\mathbf I_j$ if $\eta = \rho
  \char 94 \langle j \rangle,j < 2$.
\end{enumerate}
\mn
Now check that $\gy$ is as required in $\boxplus_3$.]
\mn
\begin{enumerate}
\item[$\boxplus_4$]  we can choose $w_n,\rho_n$ by induction on $n$ such that
\sn
\begin{enumerate}
\item[$(a)$]  $w_n$ is a front of ${}^{n>}2$, included in ${}^k 2 \cup
  {}^{k+1}2$ for some $k = k(n)$
\sn
\item[$(b)$]  $w_0 = \{\langle \rangle\}$
\sn
\item[$(c)$]  $w_{n+1} = (w_n \backslash \{\rho_n\}) \cup \{\rho_n
  \char 94 \langle 0 \rangle,\rho_n \char 94 \langle 1 \rangle\}$ for
  some $\rho_n$
\sn
\item[$(d)$]  if $w_n \cap {}^{k(n)}2 \ne \emptyset$ then $\rho_n \in
  w_n \cap {}^{k(n)}2$ and $k(n+1) = k(n)$
\sn
\item[$(e)$]  if $w_n \cap {}^{k(n)}2 = \emptyset$ then $\rho_n \in
  w_n = {}^{k(n)+1}2$ and $k(n+1) = k(n)+1$
\sn
\item[$(f)$]  necessarily $\{\rho_n:n < \omega\} = {}^{\omega >}2$.
\end{enumerate}
\end{enumerate}
\mn
[Why?  Trivially.]
\mn
\begin{enumerate}
\item[$\boxplus_5$]  we can choose $\gx^1_n,\gx^2_n$ by induction on
  $n$ such that
\sn
\begin{enumerate}
\item[$(a)$]  $\gx^1_n,\gx^2_n \in Q_{n,w_n}$
\sn
\item[$(b)$]  $\gx^2_n$ is gotten from $\gx^1_n$ by $\boxplus_2$
\sn
\item[$(c)$]  $\gx^1_{n+1}$ is gotten from $\gx^2_n$ by $\boxplus_3$
  for $\rho_n$.
\end{enumerate}
\end{enumerate}
\mn
[Why?  For $n=0$, choose $\gx^1_n$ by $\boxplus_1$ and then $\gx^2_n$
 by $\boxplus_2$.  For $n=k+1$ choose $\gx^1_n$ satisfying (c) by
 $\boxplus_3$ for $\rho_n$ and then choose $\gx^2_n$ satisfying (b) by
 $\boxplus_2$.]
\mn
\begin{enumerate}
\item[$\boxplus_6$]  we can choose continuous functions $\rho \mapsto
  \bar a_\rho$ and $\rho \mapsto \varrho_{\rho,n}$ for $n <\omega$
  where $\rho$ varies on ${}^\omega 2$ such that:
\sn
\begin{enumerate}
\item[$(a)$]  $\bar a_\rho \in {}^m \gA$
\sn
\item[$(b)$]  $\varrho_{\rho,\ell} \in {}^\omega \kappa$
\sn
\item[$(c)$]  if $\rho_1 \ne \rho_2 \in {}^\omega 2$ and we let $\rho
  = \rho_1 \cap \rho_2,k = \ell g(\rho)$ and $n$ is such that $\rho_n
  = \rho$ \then \, letting $\iota(\ell) = \rho_\ell(k)$ so $\iota(1)
  \ne \iota(2)$ are $< 2$, \then \, for $\ell=1,2$ we have:
\sn
\item[${{}}$]  $\bullet \quad \varrho_{\rho_\ell,n}$ witness $\gA
  \models \varphi^{\iota(\ell)}_{\gx_{n+1},n}(\bar a_\rho,\bar
  c_{\gx_{n+1},n})$.
\end{enumerate}
\end{enumerate}
\mn
[Why?  See the $(*)_{\gx_{n+1},\ell}$ and the choice of the $\gx$'s.]

Clearly $\boxplus_6$ gives the desired conclusion (even absolutely).
\end{PROOF}

\begin{remark}
\label{7.8}  
This claim can be generalized replacing $\aleph_0$
by $\mu$, strong limit singular of cofinality $\aleph_0$. 
\end{remark}

\centerline {$* \qquad * \qquad *$}

\begin{definition}
\label{7.9}
Let $(\gA,\Delta)$ be a $\kappa$-candidate and $m < \omega$.

\noindent
1) For $A \subseteq \gA$ and $\bar a \in {}^n \gA$ let
 $\tp_\Delta(\bar a,A,{\gA}) = \{\varphi(\bar x,\bar b):
\varphi(\bar x,\bar y) \in \Delta$ and $\bar b \in {}^{\ell g(\bar y)}(A)$ and 
${\gA} \models \varphi[\bar a,\bar b]\}$.  

\noindent
2) $\Phi_{{\gA},\Delta,A}^{\text{pr},m} = 
\{(\varphi_0(\bar x,\bar b),\varphi_1(\bar x,\bar b)):\varphi_0(\bar
x,\bar y),\varphi_1(\bar x,\bar y)$ belong to $\Delta$ and $\bar b
\in {}^{\ell g(\bar y)} A$ and $\bar x = \langle x_\ell:\ell < m
\rangle$ and ${\gA} \models \neg(\exists \bar x)[\varphi_0(\bar
x,\bar b) \wedge \varphi_1(\bar x,\bar b)]\}$ 
where $A \subseteq {\gA},\Delta$ a set of ${\gA}$-formulas,
and so $\Phi^m_{\gA,\Delta} = \Phi^{\pr,m}_{\gA,\Delta,A}$ when $A$ is the
universe of $\gA$.

\noindent
3) ${\mathbf S}^m_\Delta(A,{\gA}) = \{\text{tp}_\Delta(\bar a,A,{\gA}):
\bar a \in {}^m {\gA}\}$ where $A \subseteq {\gA}$ and
$\Delta$ a set of $\bbL(\tau_{\gA})$-formulas.
\end{definition}

\begin{definition}
\label{7.10}
1) We say $({\gA},\Delta)$ is
$(\mu,\Delta,\lambda)$-unstable \when \, there are $M \subseteq
{\mathfrak A},m < \omega$ and $\langle \bar a_\alpha:\alpha < \lambda \rangle$
such that:
\mn
\begin{enumerate}
\item[$(a)$]   $\bar a_\alpha \in {}^m{\gA}$
\sn
\item[$(b)$]   if $\alpha \ne \beta$ are $< \lambda$ then for some
$(\varphi_0(\bar x,\bar b),\varphi_1(\bar x,\bar b)) \in
\Phi^m_{\gA,\Delta,M}$ (see Definition \ref{7.9}(2)) 
we have $\varphi_0(\bar x,\bar b) \in 
\text{ tp}_\Delta(\bar a_\alpha,M,{\gA})$ 
and $\varphi_1(\bar x,\bar b) \in \text{ tp}_\Delta(\bar a_\beta,M,{\gA})$
\sn
\item[$(c)$]    $\|M\| \le \mu$.
\end{enumerate}
\mn
1A) Let ${\gA}$ be $(\aleph_0,\Delta,\text{per})$-unstable mean
that $({\gA},\Delta)$ is $\aleph_0$-unstable; here per stands for perfect.

\noindent
2) In part (1) and (1A) we add ``weakly" if we weaken clause (b) to
\mn
\begin{enumerate}
\item[$(b)^-$]   tp$_\Delta(\bar a_\eta,M,{\gA}) 
\ne \text{ tp}_\Delta(\bar a_\nu,M,{\gA})$ for $\eta \ne \nu$ from $X$ 

(so if $\Delta$ is closed under negation there is no difference); in
part (1), $X = \lambda$ and in part (1A), $X = {}^\omega 2$.
\end{enumerate}
\mn
3) We use $(\mu_0,\Delta,x,\bbQ)$ where $\bbQ$ is a forcing notion
\Iff \, the example is found in
$\mathbf V^{\bbQ}$ such that usually $M$ is in $\mathbf V$ and we add 
an additional possibility if $x = \text{ per}$ then $M \in \mathbf V$
and $X = ({}^\omega 2)^{\mathbf V}$ (here per stands for perfect).

\noindent
4) We may replace ``a forcing notion $\bbQ$" by a family ${\gK}$
of forcing notions (e.g. the family of c.c.c. ones) meaning: for at
least one of them. 

\noindent
5) We replace unstable by stable for the negation. 
\end{definition}

\begin{observation}
\label{7.11}
If $\Delta$ is closed under negation,
\then \, ${\gA}$ is weakly $(\aleph_0,\Delta,\lambda)$-unstable
iff ${\gA}$ is $(\aleph_0,\Delta,\lambda)$-unstable.
\end{observation}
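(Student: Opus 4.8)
The plan is to prove both directions with the \emph{same} witnessing data $(M,m,\langle\bar a_\alpha:\alpha<\lambda\rangle)$, so that no re-indexing or thinning of the sequence is ever needed; the content is entirely an unwinding of Definitions \ref{7.9} and \ref{7.10}.

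First I would dispatch the direction ``$(\aleph_0,\Delta,\lambda)$-unstable $\Rightarrow$ weakly $(\aleph_0,\Delta,\lambda)$-unstable'', which in fact uses nothing about $\Delta$. Take a witness as in Definition \ref{7.10}(1). For $\alpha\ne\beta<\lambda$ let $(\varphi_0(\bar x,\bar b),\varphi_1(\bar x,\bar b))\in\Phi^{\text{pr},m}_{\gA,\Delta,M}$ be the pair given by clause (b), so $\bar b\in{}^{\ell g(\bar y)}M$, $\varphi_0(\bar x,\bar b)\in\text{tp}_\Delta(\bar a_\alpha,M,\gA)$, $\varphi_1(\bar x,\bar b)\in\text{tp}_\Delta(\bar a_\beta,M,\gA)$, and, by Definition \ref{7.9}(2), $\gA\models\neg(\exists\bar x)[\varphi_0(\bar x,\bar b)\wedge\varphi_1(\bar x,\bar b)]$. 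Since $\gA\models\varphi_1[\bar a_\beta,\bar b]$ this forces $\gA\models\neg\varphi_0[\bar a_\beta,\bar b]$, hence $\varphi_0(\bar x,\bar b)\notin\text{tp}_\Delta(\bar a_\beta,M,\gA)$ while $\varphi_0(\bar x,\bar b)\in\text{tp}_\Delta(\bar a_\alpha,M,\gA)$; so the two $\Delta$-types differ, which is clause $(b)^-$. Clauses (a), (c) are verbatim the same, so the same data witnesses weak unstability.

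The substantive direction is ``weakly $(\aleph_0,\Delta,\lambda)$-unstable $\Rightarrow$ $(\aleph_0,\Delta,\lambda)$-unstable'', and here closure of $\Delta$ under negation is used exactly once. Given a witness satisfying $(b)^-$ and $\alpha\ne\beta<\lambda$, pick $\psi(\bar x,\bar y)\in\Delta$ and $\bar b\in{}^{\ell g(\bar y)}M$ with $\psi(\bar x,\bar b)$ lying in exactly one of $\text{tp}_\Delta(\bar a_\alpha,M,\gA)$, $\text{tp}_\Delta(\bar a_\beta,M,\gA)$; such a $\psi$ exists since the two types differ. As $\neg\psi(\bar x,\bar y)\in\Delta$, whichever of $\bar a_\alpha,\bar a_\beta$ does not satisfy $\psi(\bar x,\bar b)$ has $\neg\psi(\bar x,\bar b)$ in its $\Delta$-type; so, choosing the labelling $\{\varphi_0,\varphi_1\}=\{\psi,\neg\psi\}$ with $\varphi_0(\bar x,\bar b)\in\text{tp}_\Delta(\bar a_\alpha,M,\gA)$ and $\varphi_1(\bar x,\bar b)\in\text{tp}_\Delta(\bar a_\beta,M,\gA)$, I obtain a pair as required by clause (b). It lies in $\Phi^{\text{pr},m}_{\gA,\Delta,M}$ because $\gA\models\neg(\exists\bar x)[\psi(\bar x,\bar b)\wedge\neg\psi(\bar x,\bar b)]$ is a logical triviality. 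Clauses (a), (c) again carry over unchanged, so the same data witnesses $(\aleph_0,\Delta,\lambda)$-unstability.

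The only place a slip is possible is the orientation in clause (b), which is asymmetric in $\alpha$ and $\beta$: for each \emph{ordered} pair one must put the formula belonging to $\text{tp}_\Delta(\bar a_\alpha,M,\gA)$ first. This is harmless precisely because both $\psi$ and $\neg\psi$ are available in $\Delta$, so swapping their roles handles the pair $(\beta,\alpha)$. Beyond this bookkeeping there is no real obstacle: the observation is just the remark, already flagged in Definition \ref{7.10}(2), that under closure under negation ``the $\Delta$-types differ'' and ``some $\Delta$-formula and its negation separate the two tuples'' say the same thing.
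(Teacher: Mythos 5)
Your proof is correct and is exactly the intended argument: the paper gives no proof of this observation at all (it is treated as immediate, cf.\ the parenthetical remark in Definition \ref{7.10}(2)), and your unwinding of Definitions \ref{7.9} and \ref{7.10} --- the forward direction needing nothing about $\Delta$, the converse separating two distinct $\Delta$-types by some $\psi$ and pairing it with $\neg\psi\in\Delta$ to obtain a pair in $\Phi^{\text{pr},m}_{{\gA},\Delta,M}$ --- is precisely what that remark encodes.
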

\newpage

\section {Rank and Indiscernibility}

\begin{definition}
\label{7.12}
Let $({\gA},\Delta)$ be a $\kappa$-candidate.  Let $1 \le m \in \bbN$
and assume $D$ is a $\kappa^+$-complete filter on ${}^m \gA$; (or a
filter on some $\mathbf I \subseteq {}^m \gA$, then we interpret it as
$\{\mathbf J \subseteq {}^m \gA:\mathbf J \cap \mathbf I \in D\}$; writing
$\rk^m_\lambda$ we mean $\rk^m_D$ means $D = \{\mathbf I \subseteq {}^m
\gA:{}^m \gA \backslash \mathbf I$ has cardinality $\le \lambda\}$;
similarly $\rk^n_{< \lambda}$).

For $\mathbf I \subseteq {}^m{\gA},\mathbf I \ne \emptyset \mod D$ 
we define $\rk^m_D(\mathbf I) =
\rk^m_D(\mathbf I,\Delta,{\gA})$, an ordinal or infinity or $-1$
by defining for any ordinal $\alpha$ when $\rk^m_D(\mathbf I) \ge \alpha$
by induction on $\alpha$.
\end{definition}
\bigskip

\noindent
\underline{Case 1}:  $\alpha = 0$.

$\rk^m_D(\mathbf I) \ge \alpha$ iff $\mathbf I \ne \emptyset \mod D$.
\bigskip

\noindent
\underline{Case 2}:  $\alpha$ limit.

$\rk^m_D(\mathbf I) \ge \alpha$ iff $\rk^m_D(\mathbf I) \ge \beta$ for every
$\beta < \alpha$.
\bigskip

\noindent
\underline{Case 3}:  $\alpha = \beta +1$.

$\rk^m_D(\mathbf I) \ge \alpha$ iff (a) + (b) holds where
\mn
\begin{enumerate}
\item[$(a)$]   if $\mathbf I = \cup\{\mathbf I_i:i < \kappa\}$ then for
some $i < \kappa$ we have $\rk^m_D(\mathbf I_i) \ge \beta$
\sn
\item[$(b)$]   we can find $\bar \varphi(\bar x,\bar b) \in
\Phi^m_{{\gA},\Delta}$ and $\mathbf I_0,\mathbf I_1 \subseteq \mathbf I$ such that 
$\rk^m_D(\mathbf I_\ell) \ge \beta$ and
$\bar a \in \mathbf I_\ell \Rightarrow {\gA} \models \varphi_\ell(\bar a,\bar
b)$ for $\ell = 0,1$.
\end{enumerate}

\begin{observation}
\label{7.13}
Assume $({\gA},\Delta)$ is a $\kappa$-candidate.

\noindent
1) If $\alpha \le \beta$ are ordinals and $\rk^m_D(\mathbf I) \ge \beta$
\then \, $\rk^m_D(\mathbf I) \ge \alpha$. 

\noindent
2) $\rk^m_D(\mathbf I) \in \Ord \cup \{-1,\infty\}$ is well
defined (for $\mathbf I \subseteq {}^m {\gA})$. 

\noindent
3) If $\mathbf I_1 \subseteq \mathbf I_2 \subseteq {\gA}$ then $\rk^m_D(\mathbf I_1)
\le \rk^m_D(\mathbf I_2)$.
\end{observation}

\begin{PROOF}{\ref{7.13}}
Trivial.
\end{PROOF}

\begin{theorem}
\label{7.14}
The following are equivalent if
$2^{\aleph_0} \ge \kappa^+$ and $({\gA},\Delta)$ is a $\kappa$-candidate:
\mn
\begin{enumerate}
\item[$(a)$]   $\rk^m_\kappa({}^m{\gA}) \ge \kappa^+$ for some $m$
\sn
\item[$(b)$]   ${\gA}$ is $(\aleph_0,\Delta)$-unstable, see Definition
\ref{7.1}(3) 
\sn
\item[$(c)$]   ${\gA}$ is $(\aleph_0,\Delta,\kappa^+)$-unstable, see
  Definition \ref{7.10}(1)
\sn
\item[$(d)$]   $\rk^m_\kappa({}^m{\gA}) = \infty$ for some $m$
\sn
\item[$(e)$]   $\gA$ is not $(\aleph_0,\Delta)$-stable, see Definition
\ref{7.1}(2)
\sn
\item[$(f)$]   the assumption $\boxplus$ of \ref{7.7A}.
\end{enumerate}
\end{theorem}
 
\begin{PROOF}{\ref{7.14}}
We first prove $(a) \Rightarrow (b) \Rightarrow (c) \Rightarrow (d)
\Rightarrow (a)$, then $(c) \Rightarrow (e), \neg(d) \Rightarrow
\neg(e)$ and lastly $(a) \Rightarrow (f) \Rightarrow (b)$; clearly
this is sufficient.

\noindent
\underline{$(a) \Rightarrow (b)$}.

Let ${\cP}_\alpha = \{\mathbf I \subseteq {}^m{\gA}:\rk^m_\kappa(\mathbf I) 
\ge \alpha\}$ for $\alpha < \kappa^+$ and apply Claim \ref{7.7A}.
\medskip

\noindent
\underline{$(b) \Rightarrow (c)$}:  Trivial recalling Definition
\ref{7.1}, \ref{7.10} as we are assuming $2^{\aleph_0} \ge \kappa^+$.
\medskip

\noindent
\underline{$(c) \Rightarrow (d)$}:

Let $A \subseteq {\gA}$ be countable and $\{\bar a_\alpha:\alpha <
\kappa^+\} \subseteq {}^m{\gA}$ exemplify that ${\gA}$ 
is $(\aleph_0,\Delta,\kappa^+)$-unstable. 

Now \wilog \,
\mn
\begin{enumerate}
\item[$(*)$]   if $\bar b \subseteq A,\varphi(\bar x,\bar y) \in
  \Delta$ then 
\sn
\begin{enumerate}
\item[$\bullet$]  $\{\alpha < \kappa^+:{\gA} \models
\varphi(\bar a_\alpha,\bar b)\}$ is either unbounded or is empty
\sn
\item[$\bullet$]  $\{\alpha < \kappa^+:\gA \models 
\neg\varphi[\bar a_\alpha,\bar b]\}$ is either unbounded or is empty.
\end{enumerate}
\end{enumerate}
\mn
Now let ${\cP} = \{\{\bar a_\alpha:\alpha \in S\}:S \subseteq
\kappa^+$ is unbounded$\}$.  Now we can prove by
induction on $\alpha$ that $\mathbf I \in {\cP} \Rightarrow
\rk^m_D(\mathbf I) \ge \alpha$ recalling \ref{7.10}(1)(b).
\medskip

\noindent
\underline{$(d) \Rightarrow (a)$}:  Trivial.
\medskip

\noindent
\underline{$\neg(e) \Rightarrow \neg(c)$}:

Let $m,M$ and $\langle \bar a_\alpha:\alpha < \kappa^+\rangle$ be as in
Definition \ref{7.10}(1).  Let
$\chi$ be large enough and let $N \prec (\cH(\chi),\in)$ be of
cardinality $\kappa$ such that $\kappa + 1 \subseteq N$ hence $M
\subseteq N$ and $\gA,\Delta,M$ and $\langle \bar a_\alpha:\alpha <
\kappa^+\rangle$ belong to $N$.  So by \ref{7.2}(3) for every $\alpha
< \kappa^+$ for some $\mathbf c \in N$ we have $(**)$ of $\boxplus$ of
Definition \ref{7.1}(2).  As $\|N\| = \kappa$ necessarily for some
$\mathbf c_* \in N$ we have $S = \{\alpha < \kappa^+:\mathbf c_\alpha =
\mathbf c_*\}$ has cardinality $\kappa^+$.  Choose $\alpha \ne \beta$
from $S$ and get a contradiction to the choice of the $\langle \bar
a_\alpha:\alpha < \kappa^+\rangle$.
\medskip

\noindent
\underline{$\neg(d) \Rightarrow \neg(e)$}: 

We have to prove $\neg(e)$, i.e. $\gA$ is $(\Delta,\aleph_0)$-stable,
see Definition \ref{7.1}(2).  We use \ref{7.14}(c).

So we can find $m < \omega,N \prec (\cH(\chi),\in),\gA \in N,\kappa +1
\subseteq N$ and $\|N\| = \kappa$ and $\bar a \in {}^m \gA$ and it
suffices to find a function $\mathbf c$ in \ref{7.1}(2).  Now let $\cZ :=
\{\rk^m_\kappa(\mathbf I):\mathbf I \in \cB\}$ where $\cB = \{\mathbf I:\mathbf I
\in N$ is a non-empty subset of
${}^m \gA$ to which $\bar a$ belongs$\}$.  Now this family $\cB$ is not
empty (as ${}^m \gA \in \cB$) hence $\cZ \ne \emptyset$.

Of course, $\infty \notin \cZ$ by our present
assumption, i.e. $\neg(d)$.  Hence $\beta = \min(\cZ)$ is a well
defined ordinal, so by the definition of $\cZ$ there is $\mathbf I \in
\cB$ such that $\rk^m_\kappa(\mathbf I) = \beta$; clearly $\mathbf I \in N$
by the choice of $\cB$, hence also $\beta \in N$.  Now why
$\rk^m_\kappa(\mathbf I) \ngeq \alpha := \beta +1$?  By Definition
\ref{7.12}, case 3, clause (a) or clause (b) there fail.
\medskip

\noindent
\underline{Case 1}:  Clause (a) fails.

Then there is a sequence $\bar{\mathbf I} = \langle
\mathbf I_i:i < \kappa\rangle$ of subsets of $\mathbf I$ with union
$\mathbf I$ such that $i < \kappa \Rightarrow \rk^m_\kappa(\mathbf I_i) <
\beta$, clearly $\bar{\mathbf I} \in \cH(\chi)$ hence as $\mathbf I,\beta
\in N$ \wilog \,
$\bar{\mathbf I} \in N$.  As $\kappa \subseteq N$ necessarily $i < \kappa
\Rightarrow \mathbf I_i \in N$ and obviously $i < \kappa \Rightarrow \mathbf I_i
\subseteq \mathbf I \subseteq {}^m \gA$.

Lastly, as $\bar a \in \mathbf I$ necessarily there is $j < \kappa$ such
that $\bar a \in \mathbf I_j$.  So $\mathbf I_j$ witness
$\rk^m_\kappa(\mathbf I_j) \in \cZ$ and as said above
$\rk^m_\kappa(\mathbf I_j) < \beta$, so contradicting the choice of
$\beta$ as $\min(\cZ)$.
\medskip

\noindent
\underline{Case 2}:  Clause (b) (of case 3 of Definition \ref{7.12})
fails (for our $\alpha,\beta,\mathbf I$).

So
\mn
\begin{enumerate}
\item[$\odot_1$]  for every $\bar\varphi(\bar x,\bar b) \in
\Phi^m_{\gA,\Delta}$ we can choose $\iota < 2$ such that
$\rk^m_\kappa(\{\bar a' \in \mathbf I:\gA \models \varphi_\iota[\bar a',\bar
b]\}) < \beta$.
\end{enumerate}
\mn
So there is a function $\mathbf t \in N$ with domain
$\Phi^m_{\gA,\Delta}$ and range $\subseteq \{0,1\}$ such that
\mn
\begin{enumerate}
\item[$\odot_2$]  if $\bar\varphi(\bar x,\bar b) \in
\Phi^m_{\gA,\Delta}$ then $\mathbf t = \mathbf t(\bar\varphi(\bar x,\bar
b))$ satisfies $\rk^m_\kappa(\mathbf I_{\bar\varphi(\bar x,\bar b),\mathbf t}) <
\beta$ where
\sn
\item[$\odot_3$]   $\mathbf I_{\bar\varphi(\bar x,\bar b),\mathbf t} = \{\bar a'
\in \mathbf I:\gA \models \varphi_{\mathbf t}[\bar a',\bar b]\}$.
\end{enumerate}
\mn
However
\mn
\begin{enumerate}
\item[$\odot_4$]  if $\bar\varphi(\bar x,\bar b) \in
\Phi^m_{\gA,\Delta} \cap N$ and $\mathbf t < 2$ and $\gA \models \varphi_{\mathbf
t}[\bar a,\bar b]$ then
\sn
\begin{enumerate}
\item[$(a)$]  $\mathbf I_{\bar\varphi(\bar x,\bar b),\mathbf t} \in \cB$
\sn
\item[$(b)$]  $\rk^m_\kappa(\mathbf I_{\bar\varphi(\bar x,\bar b),\mathbf
t}) \in \cZ$
\sn 
\item[$(c)$]  $\rk^m_\kappa(\mathbf I_{\bar\varphi(\bar x,\bar b),\mathbf
t}) = \beta$  
\sn
\item[$(d)$]  $\mathbf t \ne \mathbf t(\bar\varphi(\bar x,\bar b))$.
\end{enumerate}
\end{enumerate}
\mn
[Why?  For clause (a): as $\bar\varphi(\bar x,\bar b) \in N$ and
$\mathbf I \in \cB \subseteq N$ clearly 
$\mathbf I_{\bar\varphi(\bar x,\bar b),\mathbf t} \in N$ and it is
$\subseteq \mathbf I \subseteq {}^m \gA$.  Also, $\bar a \in
\mathbf I_{\bar\varphi(\bar x,\bar b),\mathbf t}$ by the assumption of
$\odot_4$, together $\mathbf I_{\bar\varphi(\bar x,\bar b),\mathbf t} \in
\cB$ as required.  Hence by the definition of $\cZ$ also clause (b) holds.
But by \ref{7.13}(3) we have $\rk^m_\kappa(\mathbf I_{\bar\varphi(\bar x,
\bar b),\mathbf t}) \le \rk^m_\kappa(\mathbf I) = \beta$ so by the choice
of $\beta$ as $\min(\cZ)$ we get equality, i.e. clause (c) holds.
Recalling $\odot_2$ we get clause (d).]

Now by $\odot_4$ (as $\mathbf t$ is a function from $N$), we are done
proving $\neg(e)$.
\medskip

\noindent
\underline{$(a) \Rightarrow (f)$}:  

For $\alpha < \kappa^+$ let
$\cP_\alpha = \{\mathbf I \subseteq {}^m \gA:\rk^m_K(\mathbf I) \ge
\alpha\}$ and recalling Definition \ref{7.12} it is easy.
\medskip

\noindent
\underline{$(f) \Rightarrow (b)$}:  By \ref{7.7A}.
\end{PROOF}

\begin{conclusion}
\label{7.14d}
1) The property ``$\rk^m_\kappa(\gA) = \infty$" is preserved by forcing.

\noindent
2) If $\rk^m_\kappa({}^m \gA) = \infty$ 
\then \, for some $\cU_1 \subseteq \kappa$
   coding the definitions of $\gA$ and of the $\kappa$-Suslin
   definitions of every $\varphi(\bar x) \in \Delta$, and $\cU_2
\subseteq \Ord$ every forcing extension of $\mathbf L[\cU_1,\cU_2]$
satisfies $\rk^m_\kappa({}^m \gA) = \infty$.
\end{conclusion}

\begin{PROOF}{\ref{7.14d}}
1) By Theorem \ref{7.14} it suffices to prove clause (b) there
   ($(\gA,\Delta)$ is $(\aleph_0,\Delta)$-unstable) is preserved, but
   by Theorem \ref{7.14}, i.e. $(a) \Rightarrow (f)$ there, the
   assumption of \ref{7.7A} holds but its conclusion says that
``$(\gA,\Delta)$ is $(\aleph_0,\Delta)$-unstable" holds also in
   forcing extensions of $\mathbf V$.

\noindent
2) Similarly.
\end{PROOF}

\begin{definition}
\label{7.15}
If $p$ is a $(\Delta_1,m)$-type 
over $A$ in ${\gA}$ (i.e. a set of formulas $\varphi(\bar x,\bar a)$ 
with $\varphi(\bar x,\bar y) \in \Delta_1,\bar a \subseteq A$), we let
(may write $\mu$ instead of $< \mu^+$)

\begin{equation*}
\begin{array}{clcr}
\rk^m_{< \lambda}(p,\Delta_1,{\gA}) = 
\Min\{\rk^m_{< \lambda} (\bigwedge\limits_{\ell < n} 
\varphi_\ell({}^m{\gA},\bar b_\ell),\Delta_1,{\gA}):&n < \omega \\
  &\text{and } \varphi_\ell(\bar x,\bar b_\ell) \in p \text{ for }
\ell < n\}.
\end{array}
\end{equation*}
\end{definition}

\begin{observation}
\label{7.16}
1) If $p \subseteq q$ (or just $q \vdash
p$) are $(\Delta,m)$-types in ${\gA}$ \then \,
$\rk^m_{< \lambda}(q,\Delta,{\gA}) \le \rk^m_{< \lambda}(p,\Delta,A)$. 

\noindent
2) If $q$ is a $(\Delta,m)$-type in ${\gA}$ \then \, for some finite
$p \subseteq q$ we have

\[
\rk^m_{<\lambda}(q,\Delta,{\gA}) = \rk^m_{<\lambda}(p,\Delta,{\gA})
\]

\mn
hence

\[
p \subseteq r \subseteq q \Rightarrow \rk^m_{<\lambda}(r,\Delta,{\gA}) =
\rk^m_{<\lambda}(p,\Delta,{\gA}).
\]
\end{observation}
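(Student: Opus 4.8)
The plan is to reduce both parts to the monotonicity statement Observation~\ref{7.13}(3), exactly as in the classical computation of Morley rank of a type from a single formula in it. Throughout, for a finite set $p_0=\{\varphi_\ell(\bar x,\bar b_\ell):\ell<n\}$ of $(\Delta,m)$-formulas over a subset of ${\gA}$, write $D(p_0)=\bigcap_{\ell<n}\varphi_\ell({}^m{\gA},\bar b_\ell)\subseteq{}^m{\gA}$, so that Definition~\ref{7.15} reads $\rk^{\ell(*)}(q,\Delta,{\gA})=\min\{\rk^{\ell(*)}(D(q_0),\Delta,{\gA}):q_0\subseteq q\text{ finite}\}$. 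Note that $D(q_0\cup q_1)=D(q_0)\cap D(q_1)$, so the family of such $D(q_0)$ is directed downward under inclusion, and by Observation~\ref{7.13}(3) the family of values $\rk^{\ell(*)}(D(q_0),\Delta,{\gA})\in\Ord\cup\{-1,\infty\}$ is then directed downward as well; also, for a \emph{finite} type $p$ one has $\rk^{\ell(*)}(p,\Delta,{\gA})=\rk^{\ell(*)}(D(p),\Delta,{\gA})$, since $D(p)\subseteq D(p_0)$ for every finite $p_0\subseteq p$.

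For part~(1): if $p\subseteq q$ then every finite subset of $p$ is a finite subset of $q$, so the minimum defining $\rk^{\ell(*)}(q,\ldots)$ is taken over a set containing the one defining $\rk^{\ell(*)}(p,\ldots)$, whence $\rk^{\ell(*)}(q,\ldots)\le\rk^{\ell(*)}(p,\ldots)$. For the more general hypothesis $q\vdash p$ — read in this fixed-${\gA}$ setting as: each $\psi(\bar x,\bar c)\in p$ is implied in ${\gA}$ by some finite $q_\psi\subseteq q$, i.e.\ $D(q_\psi)\subseteq\psi({}^m{\gA},\bar c)$ — given any finite $p_0\subseteq p$ put $q_0=\bigcup_{\psi\in p_0}q_\psi$; then $D(q_0)\subseteq D(p_0)$, so $\rk^{\ell(*)}(q,\ldots)\le\rk^{\ell(*)}(D(q_0),\ldots)\le\rk^{\ell(*)}(D(p_0),\ldots)$ by Observation~\ref{7.13}(3), and taking the minimum over finite $p_0\subseteq p$ yields the conclusion.

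For part~(2): the set $V=\{\rk^{\ell(*)}(D(q_0),\Delta,{\gA}):q_0\subseteq q\text{ finite}\}\subseteq\Ord\cup\{-1,\infty\}$ is nonempty, and since $\Ord\cup\{-1\}$ is well-ordered with $\infty$ adjoined as a greatest element, $V$ attains its minimum: if $-1\in V$ pick a finite $p\subseteq q$ with $D(p)=\emptyset$; if $V=\{\infty\}$ pick any finite $p\subseteq q$; otherwise let $p$ be a finite subset of $q$ with $\rk^{\ell(*)}(D(p),\ldots)$ equal to the least ordinal occurring in $V$. In every case $\rk^{\ell(*)}(p,\Delta,{\gA})=\rk^{\ell(*)}(D(p),\ldots)=\min V=\rk^{\ell(*)}(q,\Delta,{\gA})$. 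For the displayed ``hence'': if $p\subseteq r\subseteq q$ then applying part~(1) twice gives $\rk^{\ell(*)}(q,\ldots)\le\rk^{\ell(*)}(r,\ldots)\le\rk^{\ell(*)}(p,\ldots)$, and since the two ends coincide so do all three.

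I expect no serious obstacle; this is the definable analogue of the classical fact that the Morley rank of a type is witnessed by a single formula of it, and Observation~\ref{7.13}(3) does all the work. The one point deserving attention is the \emph{attainment} of the minimum in part~(2): it is an honest minimum and not merely an infimum precisely because rank takes values in the well-ordered $\Ord\cup\{-1\}$ together with a top element $\infty$, once one has recorded that the relevant family of definable sets is closed under finite intersection. A minor bookkeeping point is pinning down the reading of ``$q\vdash p$'' as above; under that reading the argument goes through verbatim, and if only the case $p\subseteq q$ is wanted the first sentence of the proof of part~(1) already suffices.
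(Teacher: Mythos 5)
Your proof is correct and is exactly the routine argument the paper leaves implicit (Observation~\ref{7.16} is stated without proof): unwinding Definition~\ref{7.15} as a minimum over finite subsets, using the monotonicity of Observation~\ref{7.13}(3), and noting the minimum is attained because ranks lie in $\Ord\cup\{-1,\infty\}$. Your reading of ``$q\vdash p$'' and the treatment of the finite-type case are the natural ones, so nothing further is needed.
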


\begin{PROOF}{\ref{7.16}}
Obvious by the definitions.
\end{PROOF}

\begin{claim}
\label{7.17}
Assume $\kappa^+ \le 2^{\aleph_0}$.   In \ref{7.14} we can add
\mn
\begin{enumerate}
\item[$(g)$]   for some $\mu < \cf(\lambda)$ and, of course, $\kappa <
\cf(\lambda),\lambda \le 2^{\aleph_0}$ the pair $({\gA},\Delta)$ is
$(\mu,\Delta,\lambda)$-unstable
\sn
\item[$(h)$]   like (g) for every such $\mu,\lambda$
\sn
\item[$(i)$]   $({\gA},\Delta)$ is not $\aleph_0$-stable.
\end{enumerate}
\end{claim}

\begin{PROOF}{\ref{7.17}}

\noindent
\underline{$\neg(i) \Rightarrow \neg(c)$}.

Let $M \prec ({\cH}(\chi),\in,<^*_\chi)$ be countable such that $(\gA,\Delta)
\in M$ and $m < \omega$.  For every $\bar a \in
{}^m{\gA}$ there is a function $\mathbf c_{\bar a} \in M$ from
$\Phi^m_{({\gA},\Delta)}$ to $\{0,1\}$ as in Definition
\ref{7.1}.  So if $\bar a_i \in {}^m{\gA}$ for $i < \kappa^+$
then for some $i < j < \kappa^+$ we have 
$\mathbf c_{\bar a_i} = \mathbf c_{\bar a_j}$ because $M$ is countable
hence for no $\bar\varphi(\bar x,\bar b) \in \Phi^m_{(M,\Delta)}$ do we
have $\varphi_0(\bar x,\bar b) \in \tp_\Delta(\bar a_i,
M,\gA),\varphi_1(\bar x,\bar b) \in \tp_\Delta(\bar a_j,M,\gA)$.
So clearly $(c)$ fails.
\medskip

\noindent
\underline{$(i) \Rightarrow (c)$}.

Fix $({\cH}(\chi_0),\in,<^*_\chi)$ and let

\begin{equation*}
\begin{array}{clcr}
\mathscr{S}_0 = \{M \prec ({\cH}(\chi_0),\in,<^*_\chi): \,&{\gA} \in M
\text{ and } \|M\| = \kappa \\
  &\text{and } \kappa +1 \subseteq M\}.
\end{array}
\end{equation*}

\mn
For $m < \omega$ and $\mathbf I \subseteq {}^m{\gA}$ let 
${\cJ}_{\mathbf I} = {\cJ}[\mathbf I]$ be the family of $\mathscr{S} \subseteq
\mathscr{S}_0$ such that: we can find $\langle F_x,\mathbf c_x:x \in 
{\cH}(\chi) \rangle$ (a witness) such that:
\mn
\begin{enumerate}
\item[$(\alpha)$]   $\mathbf c_x:\Phi^m_{{\gA},\Delta} \rightarrow \{0,1\}$
\sn
\item[$(\beta)$]   $F_x:{}^{\omega >}({\cH}(\chi)) \rightarrow
{\cH}(\chi)$
\sn
\item[$(\gamma)$]   if $M \in \cS_0$ is closed under $F_x$ for $x \in M$
then for every $\bar a \in \mathbf I$ for some $y \in M,\mathbf c_y$ is 
a witness for tp$(\bar a_M,M \cap {\gA},{\gA})$, see Definition \ref{7.1}(2).
\end{enumerate}
\mn
Clearly ${\cJ}_{\mathbf I}$ is a normal ideal on $\mathscr{S}_0$.  Also
if ``$m < \omega \Rightarrow \mathscr{S}_0 \in {\cJ}[{}^m{\gA}]$" then
increasing $\chi$ we get the desired result.  Toward contradiction
assume that $m < \omega$ and $\mathscr{S}_0 \notin {\cJ}[{}^m{\gA}]$
and let ${\cP}$ (i.e. ${\cP}_\alpha = {\cP}$ for $\alpha <
\kappa^+$) be the family of $\mathbf I \subseteq {}^m{\gA}$
such that $\mathscr{S}_0 \notin {\cJ}_{\mathbf I}$.

We now finish by \ref{7.7A} once we prove
\mn
\begin{enumerate}
\item[$\circledast$]   if $\mathbf I \in {\cP}$ then for some $\bar
\varphi(\bar x,\bar b) \in \Phi^m_{{\gA},\Delta}$ for each $\ell <
2$ the set $\mathbf I^\ell_{\bar \varphi(x,\bar b)}$ is $\{\bar a \in
\mathbf I:{\gA} \models \varphi_\ell(\bar a,\bar b)\}$ belong to
${\cP}$.
\end{enumerate}
\mn
If not, for every $\bar \varphi(\bar x,\bar b) \in
\Phi^m_{{\gA},\Delta}$ there is $\ell = \mathbf c[\bar \varphi(\bar
x,\bar b)] < 2$ and $\langle (F^{\bar \varphi(\bar x,\bar b)}_x,\mathbf
c^{\bar \varphi(\bar x,\bar b)}_x):x \in {\cH}(\chi) \rangle$
witnessing $\mathscr{S}_0 \in \cJ[\mathbf I^\ell_{\bar \varphi}]$.

Define $(F_y,\mathbf c_y)$ for $y \in {\cH}(\chi)$ by: if $y =
\langle x,\bar \varphi(\bar x,\bar b) \rangle$ then $F_y = F^{\bar
\varphi(\bar x,\bar b)}_x,c_y = \mathbf c^{\bar \varphi(\bar x,b)}_x$,
otherwise $\mathbf c$.

Clearly we can find $M \in \mathscr{S}_0$ such that
\mn
\begin{enumerate}
\item[$\circledast_1$]   if $\bar\varphi(\bar x,\bar b) \in
\Phi^m_{{\gA},\Delta} \cap M$ and $x \in M$ then $M$ is closed
under $F^{\bar \varphi(\bar x,\bar b)}_x$
\sn
\item[$\circledast_2$]    for some $\bar a \in {}^m{\gA}$, no
$\mathbf c_y,y \in M$ defines tp$_\Delta(\bar a,M \cap {\gA},{\gA})$, in
the sense of \ref{7.1}(2).
\end{enumerate}
\mn
But $\mathbf c$ does it!  So we are done.  Alternatively, note $(i) =
(c)$ so we can quote \ref{7.14}.
\medskip

\noindent
\underline{$(h) \Rightarrow (g)$}.

Obvious.
\medskip

\noindent
\underline{$(g) \Rightarrow (d)$}.

Like $(c) \Rightarrow (d)$.
\medskip

\noindent
\underline{$(c) \Rightarrow (h)$}.

Repeat the proof of ``$(c) \Rightarrow (d)$" in \ref{7.14}.
\end{PROOF}

\begin{theorem}
\label{7.18}
Assume that $({\gA},\Delta)$ is a
$\kappa$-candidate and is $\aleph_0$-stable or just $\mu$-stable.

For some $\xi < \kappa^+$ we have: if $\lambda \ge \mu,m < \omega,A
\subseteq {\gA},|A| \le \lambda$ and $\bar a_\alpha \in {}^m{\gA}$ 
for $\alpha < \lambda^{+ \xi}$ \then \, for some $S \subseteq
\lambda^{+ \xi}$ of cardinality $\lambda$ the sequence $\langle \bar
a_\alpha:\alpha \in S \rangle$ is a $\Delta$-indiscernible sequence over $A$ in
${\gA}$.
\end{theorem}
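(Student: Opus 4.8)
The plan is to combine the rank machinery of \ref{7.12}--\ref{7.17} with the end-extension indiscernibility lemma \ref{7.4}, iterating the latter $\aleph_1$-many times along the index set $\lambda^{+\xi}$. First I would replace $\Delta$ by its closure $\Delta'$ under negation, permutation of variables, and finite Boolean combinations; this is still a countable set of $\Sigma^1_{\ell(*)}$-formulas, so $({\gA},\Delta')$ is still an $\aleph_0$-stable (or $\mu$-stable) candidate by \ref{7.14}/\ref{7.17}, and $\Delta'$-indiscernibility implies $\Delta$-indiscernibility. Working with $\Delta'$ lets me invoke \ref{7.4}(2),(3): for any stationary $S\subseteq\lambda^+$ and any $n<\omega$ there is stationary $S'\subseteq S$ with $\langle\bar a_\alpha:\alpha\in S'\rangle$ being $(\Delta',n)$-end extension indiscernible over the prescribed parameter set. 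The key extra ingredient beyond end-extension is a rank drop: by $\aleph_0$-stability and \ref{7.14}, $\rk^{\ell(*)}({}^m{\gA})<\omega_{\ell(*)}$ is an ordinal $<\omega_1$ (using $\ell(*)\le 2$ and, when $\ell(*)=2$, that the rank is still countable since a longer rank would give instability by \ref{7.14}(a)$\Leftrightarrow$(b)); call a bound for it $\xi<\omega_1$. This $\xi$ is the ordinal promised in the statement.

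Next I would set up the iteration. Put $\lambda_0=\lambda^{+\xi}$ and descend: given a stationary (or just size-$\lambda^{+\xi}$) set of indices on which the sequence has ``consumed'' $i$ levels of the rank, apply the End-Extension Indiscernibility Lemma \ref{7.4}(3) (with $n$ chosen appropriately, and over $A$ together with the finitely many earlier blocks) to pass to a subsequence of size $\lambda^{+(\xi-1)}$ (or of the appropriate cofinal size) which is $(\Delta',n)$-end-extension indiscernible. At each step, either the sequence is already fully $\Delta'$-indiscernible over $A$ on a large subset — in which case we are done — or there is a $\Delta'$-pair $\bar\varphi(\bar x,\bar b)$ with $\bar b$ drawn from $A$ plus the finitely many blocks occurring in the relevant tuples, separating two cofinal sub-blocks; feeding this into clause (b) of Case 3 of Definition \ref{7.12} shows that the $\rk^{\ell(*)}$ of the ``tail type'' of the subsequence strictly drops. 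Since the rank is an ordinal $<\omega_1$ bounded by $\xi$, this can happen at most $\xi$ times, so after at most $\xi$ rounds — each costing one ``$+$'' in the exponent $\lambda^{+\xi}$ — we reach a set of size $\lambda$ on which the sequence is genuinely $\Delta'$-indiscernible over $A$, hence $\Delta$-indiscernible over $A$ in ${\gA}$.

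The main obstacle, and where the bookkeeping has to be done carefully, is the passage from \emph{end-extension} indiscernibility to \emph{full} indiscernibility: \ref{7.4} only controls types of $\bar a_{t_0}\char94\cdots\char94\bar a_{t_{n-1}}\char94\bar a_{t_n}$ versus the ``shorter-last'' variant, so a single application does not eliminate dependence on which later indices are chosen. The fix is the standard one from \cite[Ch.~III]{Sh:c}: combine the rank-drop argument (each failure of full indiscernibility is witnessed by a $\Phi^m_{{\gA},\Delta'}$-pair splitting the tail, which lowers $\rk^{\ell(*)}$ of the tail type computed via Definition \ref{7.15}) with \ref{7.16}(2), so that after finitely many further refinements the tail type stabilizes; the ``$(\Delta',n)$-end extension for all $n$'' output of \ref{7.4}(3), applied for each $n$ and amalgamated via \ref{7.4}(4)(i) (the nested regressive functions $f_n$), then upgrades end-extension indiscernibility to full indiscernibility on the final subset. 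The one genuinely delicate point is making sure the parameter sets stay of size $\le\lambda$ throughout (so that \ref{7.4}(2) with $\lambda$ replaced by $\lambda^{+}$-type cofinalities applies at every stage) and that the $\xi$ successive shrinkings only cost $\xi$ ordinal-exponent steps, which is exactly why the hypothesis is stated with $\lambda^{+\xi}$ rather than $\lambda^{+}$.
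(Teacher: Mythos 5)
Your opening move — replacing $\Delta$ by its closure $\Delta'$ under negation and Boolean combinations and claiming $({\gA},\Delta')$ is still a $\Sigma^1_{\ell(*)}$-candidate — is where the argument breaks. A candidate requires the formulas of $\Delta$ to be $\Sigma^1_{\ell(*)}$ as set-theoretic formulas (Definition \ref{7.1}(1)(b)), and the negation of a $\Sigma^1_1$ formula is in general $\Pi^1_1$, not $\Sigma^1_1$; the entire apparatus of contradictory pairs $\Phi^m_{{\gA},\Delta}$ exists precisely because negation is not available. Remark \ref{7.4y}(1) flags ``$\Delta$ closed under negation'' as a very strong hypothesis, and the introduction makes the point that it is \ref{7.18}, not \ref{7.4}, which is supposed to carry weight, exactly because \ref{7.18} does not assume it. Once you cannot pass to $\Delta'$, your rank-drop step also collapses: a failure of full $\Delta$-indiscernibility only gives a formula $\varphi$ true of one tuple and false of another, not a pair $(\varphi_0,\varphi_1)\in\Phi^m_{{\gA},\Delta}$ of jointly contradictory $\Delta$-formulas, so it does not feed into clause (b) of Case 3 of Definition \ref{7.12} and does not force $\rk^{\ell(*)}$ to drop. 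A second, independent problem is your choice of $\xi$: stability via \ref{7.14}/\ref{7.17} only bounds $\rk^{\ell(*)}({}^m{\gA})$ below $\omega_{\ell(*)}$, which for $\ell(*)=2$ means below $\omega_2$; your parenthetical claim that a rank $\ge\omega_1$ would already give instability misreads \ref{7.14}(a), so you have not produced a $\xi<\omega_1$ as the theorem demands. Finally, the bookkeeping ``one rank drop costs one $+$ in $\lambda^{+\xi}$'' is asserted rather than proved, and does not make sense as stated at limit $\xi$.

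For contrast, the paper's argument runs in the opposite direction: assume the conclusion fails for every $\xi<\omega_1$, let ${\cP}_\xi$ be the family of sets $\{\bar a_\alpha:\alpha<\lambda^{+\xi}\}$ admitting no $\lambda$-sized $\Delta$-indiscernible subsequence over $A$, and use the dichotomy $\circledast$: on a stationary $S$ either some club gives full $\Delta$-indiscernibility (excluded by hypothesis), or there is an $m$ such that $(\Delta,m)$-end-extension indiscernibility holds on a club but $(\Delta,m+1)$ fails on every club. Then \ref{7.4}(4) together with \ref{7.19} extracts, from that failure, at least two stationary classes of the regressive function $f_{m+1}$ separated by an explicit formula over a small parameter set; these splittings are exactly what Claim \ref{7.7A} needs, and \ref{7.7A} then yields $\aleph_0$-instability of $({\gA},\Delta)$, contradicting the stability hypothesis. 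So the witnesses for splitting are manufactured from failures of end-extension indiscernibility via \ref{7.19}, not from negations of $\Delta$-formulas; if you want to salvage your direct construction, that is the mechanism you would need to substitute for your appeal to $\Delta'$.
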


\begin{remark}
On getting indiscernible sets, see \ref{z20}, \ref{7.7}.
\end{remark}

\begin{PROOF}{\ref{7.18}}
First, fix $A,\lambda,\mu$ and assume the conclusion fails.  

For $\xi < \kappa^+$ let 

\begin{equation*}
\begin{array}{clcr}
{\cP}_\xi = \{\mathbf I:&\mathbf I \subseteq {}^m \gA \text{ has
  cardinality } \ge \lambda^{+ \xi} \text{ and no } \mathbf I' \subseteq
  \mathbf I \text{ of cardinality} \\
  &\lambda \text{ is } \mathbf I' \text{ a } \Delta 
\text{-indiscernible sequence over } A \text{ in } {\gA}\}.
\end{array}
\end{equation*}

\mn
So by Claim \ref{7.7A} it suffices to check the demands $\boxplus(a) -
(e)$ there are satisfied by $\bar{\cP} = \langle \cP_\xi:\xi <
\kappa^+\rangle$ defined above and $\Phi := \Phi^m_{(\gA,\Delta)}$.

Now clause (a) holds by by our assumptions (and the choice of
$\Phi$).  

Clause (b) holds by the choice of $\cP_\xi$ and clause (c) holds by
our assumption toward contradiction.  Also clause (e) there is trivial
because $\mathbf I \in \cP_\xi \wedge \mathbf I' \subseteq \mathbf I
\cap |\mathbf I'| \ge \kappa^{+ \zeta} \Rightarrow \mathbf I' \in
\cP_\zeta$.

So we are left with proving clause (d) there, for this (as $\cP_\xi$
 increases with $\xi$) it suffices to prove the following:
\mn
\begin{enumerate}
\item[$\circledast$]   if $\lambda^{+ \xi}$ is regular, $\xi > \zeta,
\bar a_\alpha \in {}^m {\gA}$
for $\alpha < \lambda^{+ \xi}$ and $S \subseteq \lambda^{+ \xi}$ is
stationary then (a) or (b) where:
\sn
\begin{enumerate}
\item[$(a)$]  for some club $E$ of $\lambda,\langle \bar
a_\alpha:\alpha \in S \cap E \rangle$ is a $\Delta$-indiscernible set over
$A$ in ${\gA}$
\sn
\item[$(b)$]  for some $\bar\varphi(\bar x,\bar b) \in \Phi^m(\gA,\Delta)$ the
sets $\mathbf I_0,\mathbf I_1$ have cardinality $\ge \lambda^{+\zeta}$
where $\mathbf I_\ell = \{\bar a \in \mathbf I:\gA \models 
\varphi_\ell[\bar a,\bar b]\}$ for $\ell=0,1$.
\end{enumerate}
\end{enumerate}
\mn
So we fix $\xi,\mathbf I$ and $\zeta$ as in $\circledast$ and we shall
prove that (a) of $\circledast$ or (b) of $\circledast$ holds.

Now
\mn
\begin{enumerate}
\item[$\oplus$]   for some $m < \omega$ and club $E^*_n$ of
$\lambda^{+ \xi}$ we have:
\sn
\begin{enumerate}
\item[$(i)$]   $\langle \bar a_\alpha:\alpha \in S \cap
E^*_n \rangle$ is $(\Delta,m)$-end extension indiscernible sequence
over $A$ in $\gA$
\sn
\item[$(ii)$]   for no club $E' \subseteq E^*_n$ of
$\lambda^{+ \xi}$ is $\langle \bar a_\alpha:\alpha \in S \cap E'
\rangle$ a sequence which is $(\Delta,m+1)$-end extension indiscernible.
\end{enumerate}
\end{enumerate}
\mn
[Why?  If for every $m$ there is a club $E^*_m$ of $\lambda^{+ \xi}$
  such that clause (i) of $\oplus$ holds, then $E = \cap\{E^*_n:n\}$
  is a club of $\lambda^{+ \xi}$ satisfying clause (i) of
  $\circledast$ for every $m$.

Hence $E$ is a club of $\lambda^{+ \xi}$ 
such that $\langle \bar a_\alpha:\alpha \in S \cap E\rangle$ is
$\Delta$-indiscernible sequence over $A$ in $\gA$, so as required in
clause (a) of $\circledast$, (hence is as required in the theorem), 
but we are assuming there is no such set, 
so for some $m$ there is no such $E^*_m$.  But for
$m=0$ there is such $E^*_m$: the club $E^*_0 := \lambda^{+\xi}$. Together
for some $m$ there is a club $E^*_m$ such that clause $(i)$ of
$\oplus$ holds for this $m$ 
but there is no such club for $m+1$.  So $\oplus$ holds indeed.]

By claim \ref{7.4}(3) there is a club $E$ of $\lambda$ and 
$\langle f_k:k < \omega \rangle$ as there, (in fact, only $f_m,f_{m+1}$
are used), let $S^*_\gamma = \{\alpha \in S:\alpha > \gamma$ and
$f_{m+1}(\alpha) = \gamma\}$, and let $W_{m+1}$ be
$\{\gamma:S^*_\gamma$ is stationary$\}$. 

Clearly there is a $E_*$ such that:
\mn
\begin{enumerate}
\item[$(a)$]  $E_*$ is a club of $\lambda^{+ \xi}$
\sn
\item[$(b)$]  $E_* \subseteq E \cap E^*_m$
\sn
\item[$(c)$]  if $\gamma < \lambda^{+ \xi}$ but $\gamma \notin
  W_{n+1}$ then $S^*_\gamma$ is disjoint to $E_* \backslash (\gamma
  +1)$ 
and even to $E_*$
\sn
\item[$(d)$]  if $\sup(W_{n+1}) < \lambda^{+ \xi}$ then $\sup(W_{n+1})
  < \min(E_*)$
\end{enumerate}
\medskip

\noindent
\underline{Case 1}:  $W_{n+1} = \emptyset$

As $f_{n+1}$ is a pressing down function on $E$, a club of $\lambda^{+
  \xi}$ this is impossible.
\medskip

\noindent
\underline{Case 2}:  $W_{n+1}$ is a singleton, say $\{\gamma_*\}$

In this case $f_{m+1}$ is constant on $S \cap E_*$ hence clearly
$\langle \bar a_\alpha:\alpha \in S \cap E_*\rangle$ contradiction to
the choice of $m$.
\medskip

\noindent
\underline{Case 3}:  $W_n$ has at least two elements

Let $\gamma_1 < \gamma_2$ belongs to $W_{n+1}$, applying \ref{b32} we
get the desired result.

Well, we still have a debt: why $\xi$ does not depend on
$(A,\mu,\lambda)$?  So assume for each
$\xi,(A_\xi,\mu_\xi,\lambda_\xi,\xi,\cI_\xi)$ forms a counterexample.
Now formalizing the above and we quote ``$\psi$ has a model such that
$(Q^M,<^M)$ is not well ordered when $\psi \in
\bbL_{\kappa^+,\aleph_0}$ has a model $M_\xi,(Q^{M_\xi},<^{M_\xi}) =
(\xi,\alpha)$ for arbitrarily large $\xi < \kappa^+"$, see Keisler \cite{Ke71}.
\end{PROOF}

\begin{claim}
\label{c19}
Assume $(\gA,\Delta)$ is a $\kappa$-candidate and it is
$(\aleph_0,\Delta)$-unstable, see Claim \ref{7.14} and Definition
\ref{7.1}(3).
\Then \, $\gA$ is not categorical$_1$, even ``under $2^{\aleph_0} =
\aleph_1$", see \ref{z7f}(3A).
\end{claim}

\begin{remark}
Of course, we can get stronger versions: many models.
\end{remark}

\begin{PROOF}{\ref{c19}}
Let $\bar x = \bar x_m,\bar\varphi_\nu = (\varphi_{\nu,0}(\bar
x,y_\nu),\varphi_{\nu,1}(\bar x,\bar y)) \in \Phi^m_{(\gA,\Delta)}$
for $\nu \in {}^{\omega >}2$ and $\langle \bar b_\nu:\nu \in
{}^{\omega >}2\rangle,\langle \bar a_\eta:\eta \in {}^\omega 2\rangle$
be as in Definition \ref{7.1}(3).

\Wilog \, this holds absolutely, i.e. if $\bbP$ is a forcing extension,
and $\eta \in ({}^\omega 2)^{\mathbf V[\bbP]}$ then we can choose $\bar
a_\eta$.

Let $\bbQ$ be the forcing of adding $\aleph_1$ Cohens,
$\name{\bar\eta} = \langle \name \eta_\alpha:\alpha <
\aleph_1\rangle$ the generic and $\bbP$ be trivial and easily 
$\gA = \gA^{\mathbf V},\gA^{\mathbf V[\bbQ]}$ are not isomorphic: let
$\name F$ be a $\bbQ$-name and assume toward contradiction $p \Vdash ``\name F$
is an isomorphism from $\gA^{\mathbf V[\bbQ]}$ onto $\gA^{\mathbf V}"$
where $p \in \mathbf G$ and $\mathbf G \subseteq \bbQ$ is generic over
$\mathbf V$.  So for some $\alpha(*) < \aleph_1,\langle
\name F \rest (\name b_\nu):\nu \in {}^{\omega >}2\rangle$ depend just
on $\langle \name \eta_\alpha:\alpha < \alpha(*)\rangle$ so in $\mathbf
V[\name{\bar\eta} \rest \alpha(*)]$ we can compute it, so $\name F(\name
\eta_{\alpha(*)})$ can have no possible value, contradiction.
\end{PROOF}
\newpage

\section {Concluding Remarks}

We try to expand on justifying \ref{0.8}, that is the 
connection to categoricity of $\psi \in \bbL_{\aleph_1,\aleph_0}$ and
how much ``categoricity in $\lambda$" depends on $\lambda$.

First we may consider more relatives of \ref{0.10}.

\begin{definition}
For a definition $\gA$ of a $\tau$-model (usually with a set of
elements a definable set of reals) and $\lambda =
\lambda^{\aleph_0}$; we say $\gA$ is $\gK$-categorical$_3$ in
$\lambda$ \when \,:  if $\bbP_2 = \bbP_0 \times \bbP_1$ are forcing
notions from $\gK$ such that $|\bbP_\ell| = \lambda$ and
$\Vdash_{\bbP_\ell} ``2^{\aleph_0} = \lambda"$, \then \, there is a
forcing notion $\bbP_3 \in \gK$ such that $\bbP_2 \lessdot \bbP_3$,
moreover $\bbP_2 \le_{\gK} \bbP_3$ and
$\Vdash_{\bbP_3} ``\gA^{\mathbf V[\bbP_0]},\gA^{\mathbf V[\bbP_1]}$ are
isomorphic". 

It may be helpful to first analyze (Cohen,$\lambda$)-categoricity$_2$,
assuming $0^{\#}$ exists, so $\gK = \{\bbQ$: for some cardinal $\mu,\bbQ$ is
the forcing notion of adding $\mu$ Cohen reals$\}$ order by $\bbP
\le_{\gK} \bbQ$ iff $\bbP \lessdot \bbQ$ and $\bbP,\bbP/\bbQ$ are from
$\gK$.  Hence $\bbP_\ell$ is
adding $\lambda$ Cohen reals for $\ell=1,2$ and $\bbP_3$ is (isomorphic
to) adding $(\lambda + \lambda)$-Cohen reals. 
So let $\bar\eta_1 \char 94 \bar\eta_2$ be generic
for adding $(\lambda + \lambda)$-Cohens to $\mathbf V$ and $\bar\eta_\ell =
\langle \eta^\ell_\alpha:\alpha < \lambda\rangle$ for $\ell =1,2$.  (So
$\bar\eta_\ell$ is generic over $\mathbf L$ and for notational
simplicity we may assume that the $\mathbf V = \mathbf L$ so $\tau$ and the
definition of $\gA$ belongs to $\mathbf L$).

For $\ell=1,2$ and $u \subseteq \lambda$ let $\gA_{\ell,u}$
be $\gA^{\mathbf V[\name{\bar\eta}_\ell \rest u]}$ and the forcing notions
$\bbP_{\ell,u} \lessdot \bbP_\ell$ for $\ell=1,2,3$ be naturally
defined.  From the categoricity assumption it follows that \wilog \,
$\bbP_3$ is adding $(\lambda + \lambda + \lambda)$-Cohen reals
$\bbP_\ell = \bbP_{3,[\lambda \ell,\lambda \ell + \lambda]}$, and
there is a $\bbP_3$-name $\name f$ such that
\mn
\begin{enumerate}
\item[$(*)$]  $\Vdash_{\bbP_3} ``\name f$ is an isomorphism from
 $\gA[\name{\bar\eta}_1]$ onto $\gA[\name{\bar\eta}_2]"$.
\end{enumerate}
\mn
So in $\mathbf V$, for some function $F:{}^\omega \lambda \rightarrow
\lambda$ we have
\mn
\begin{enumerate}
\item[$(*)$]  if $u \subseteq \lambda$ is infinite and closed under $F$
  then $f_n = \name f \rest \gA_{1,u}$ is a $\bbP_{1,u}$-name and is
an isomorphism from $\name{\gA}_{1,u}$ onto $\name{\gA}_{2,u}$.
\end{enumerate}
\mn
For $u \subseteq \lambda$ let $c \ell_F(u)$ be the closure of $u$ by
$F$.  Now if $\lambda \ge (2^{\aleph_0})^{+n}$ and $n \ge m$ \then \,
(by downward induction on $m$) we can find $\bar\alpha = \langle
\alpha_\ell:\ell \in [m,n)\rangle$ such that $k \in [m,n] \Rightarrow
  \alpha_k \notin c \ell_F(\{\alpha_\ell:\ell \in [m,n)$ but $\ell \ne
    k\} \cup (2^{\aleph_0})^{+m})$.  So letting for $v \subseteq n,u_v
    = c \ell_F(\{\alpha_k:k \in v\} \cup (2^{\aleph_0}))$ we have:
\mn
\begin{enumerate}
\item[$(*)$]   $(a) \quad f_{u_v}$ is an isomorphism from $\gA_{1,u_v}$ 
onto $\gA_{2,u_v}$ increasing with $v$ 

\hskip25pt  for $v \subseteq n$
\sn
\item[${{}}$]   $(b) \quad$ if $f(1) \subseteq v(2) \subseteq n$ then
  $f_{u_{v(1)}} \subseteq f_{u_{v(2)}}$
\sn
\item[${{}}$]   $(c) \quad$ if $v \subseteq n,k<n$ and $k \notin v$
  then $\alpha_k \notin u_v$.
\end{enumerate}
\mn
This seems to suggest the stability of $(\mu,\cP^-(n))$-diagrams as in
\cite{Sh:87b}.  We expect that for every finite sequence $\bar a$ from
$\gA_{1,n}$ its type over $\cup\{\gA_{1,u}:u \in \cP^-(n)\}$ is definable
in a suitable sense.

Of course, we may well  have to consider also (assuming for
transparency $2^{\aleph_0} = \aleph_1$) in addition to the
$\gA_{1,u_v}$ of cardinality $\aleph_1$ also countable approximation
$\gA'_{1,u_v}$.

Also more seriously, maybe the stability of $(\cP^-(n),2^{\aleph_0})$-diagrams
will not be enough, because for $\lambda > (2^{\aleph_0})^{+ \omega}$
the $F$-closure of $u,|u| = (2^{\aleph_0})^{+ \omega}$ has bigger
cardinality.  In this case maybe this stops after $\omega_1$ steps.
\end{definition}
\newpage


\bibliographystyle{amsalpha}
\bibliography{shlhetal}

\end{document}